\newcommand{\bbC}{\mathbb{C}}
\newcommand{\bbN}{\mathbb{N}}
\newcommand{\bbP}{\mathbb{P}}
\newcommand{\bbQ}{\mathbb{Q}}
\newcommand{\bbR}{\mathbb{R}}
\newcommand{\bbZ}{\mathbb{Z}}
\newcommand{\Gm}{\mathbb{G}_m}
\newcommand{\cL}{\mathcal{L}}
\newcommand{\cO}{\mathcal{O}}
\newcommand{\cP}{\mathcal{P}}
\newcommand{\cU}{\mathcal{U}}
\newcommand{\cV}{\mathcal{V}}
\newcommand{\cX}{\mathcal{X}}
\newcommand{\cY}{\mathcal{Y}}
\newcommand{\cZ}{\mathcal{Z}}
\newcommand{\rH}{\textup{H}}
\newcommand{\rR}{\textup{R}}
\newcommand{\rV}{\textup{V}}
\newcommand{\reg}{\textup{reg}}
\newcommand{\et}{\textup{\'et}}
\newcommand{\too}{\longrightarrow}
\renewcommand{\phi}{\varphi}
\renewcommand{\epsilon}{\varepsilon}
\renewcommand{\ker}{\Ker}
\renewcommand{\top}{\mathrm{top}}
\DeclareMathOperator{\Gal}{Gal}
\DeclareMathOperator{\Spec}{Spec}
\DeclareMathOperator{\im}{Im}
\DeclareMathOperator{\Ker}{Ker}
\DeclareMathOperator{\coker}{Coker}
\DeclareMathOperator{\red}{red}
\DeclareMathOperator{\GL}{GL}
\DeclareMathOperator{\Lie}{Lie}
\DeclareMathOperator{\Aut}{Aut}
\DeclareMathOperator{\growth}{growth}
\DeclareMathOperator{\grrat}{gr.rat}
\DeclareMathOperator{\grint}{gr.int}
\DeclareMathOperator{\sing}{sing}
\DeclareMathOperator{\Hilb}{Hilb}
\renewcommand{\le}{\leqslant}
\renewcommand{\ge}{\geqslant}
\theoremstyle{plain}
\newtheorem*{maintheorem}{Main theorem}
\newtheorem{theoremintro}{Theorem}
\newtheorem{theorem}{Theorem}[section]
\newtheorem{lemma}[theorem]{Lemma}
\newtheorem{proposition}[theorem]{Proposition}
\newtheorem{claim}[theorem]{Claim}
\theoremstyle{definition}
\newtheorem{definition}[theorem]{Definition}
\newtheorem{example}[theorem]{Example}
\theoremstyle{remark}
\newtheorem{remark}[theorem]{Remark}
\numberwithin{equation}{section}
\begin{document}

\author{Yohan Brunebarbe}
\address{Institut de Math\'ematiques de Bordeaux, Universit\'e de Bordeaux, 351 cours de la Lib\'eration, F-33405 Talence}
\email{yohan.brunebarbe@math.u-bordeaux.fr}

\author{Marco Maculan}

\address{Institut de Math\'ematiques de Jussieu, Sorbonne Universit\'e, 4 place Jussieu, F-75252 Paris}
\email{marco.maculan@imj-prg.fr}

\title[Counting integral points on varieties with large fundamental group]{Counting integral points of bounded height on varieties with large fundamental group}

\maketitle

\begin{abstract} The present note is devoted to an amendment to a recent paper of Ellenberg, Lawrence and Venkatesh (\cite{EllenbergLawrenceVenkatesh}). Roughly speaking, the main result here states the subpolynomial growth of the number of integral points with bounded height of a variety over a number field whose fundamental group is large. Such an improvement, \emph{i.e.} requiring large fundamental group as opposed to the existence of a geometric variation of pure Hodge structures, was already asked in \emph{op.cit.}. 

\end{abstract}

\section{Introduction}  In order to state our theorem and cast it in a general framework, let us review some background material.

\subsection{Growth of integral points} 

\subsubsection{Notation}
Let $K$ be a number field with ring of integers $\cO_K$, $X$ a projective variety over $K$, $L$ an ample line bundle on $X$, and $h$ an (absolute logarithmic) height function relative to $L$. (Further details can be found in section \ref{sec:GrowthRates}.)

For an open subset $U$ of $X$, a subring $R$ of $K$ containing $\cO_K$, a projective flat $\cO_K$-scheme $\cX$ with generic fiber $X$, and a real number $c$, set
\[ \nu(\cX, L, U, h, R; c) := \log^+ \# \{ x \in \cU(R) : h(x) \le c\}, \]
where $\cU$ is the complement of the Zariski closure of $X \smallsetminus U$ in $\cX$, and, for a real number $t$, $\log^+ t = \log \max \{ 1, t \}$. When $U= X$, the redundant specification of the open subset will be discarded.


\subsubsection{Asymptotic behaviour} The precise value of the function $\nu(\cX, L, U, h, K; c)$ is not much of interest. Instead, its asymptotic behaviour for $c\to \infty$ ought to reflect the usual `trichotomy' of algebraic varieties into the Fano, the Calabi-Yau, and the general type classes (and their logarithmic variants):\footnote{The word `trichotomy' here is improper---rather, according to the Minimal Model Program, any integral variety should be obtained as iterated fibration with generic fibre belonging to one of the preceding three classes.}
\begin{itemize}

\item When $X$ is smooth Fano and $L$ is the anti-canonical bundle $\omega_X^\vee$, Manin  conjectured (assuming the Zariski density of $X(K)$) the existence of an open subset $U$ of $X$ for which 
\[ \nu(X, \omega_X^\vee, U, K; c) \sim [K : \bbQ] c + (\rho - 1) \log c + C,\]
where $\rho$ is the Picard rank of $X$ and $C$ is a real number.\footnote{As of nowadays, Manin's conjecture is known not to hold in such a form (\cite{BatyrevTschinkel}) and alternative hypothetical statements have been suggested. This does not affect  the main theme of our paper, and the interested reader may consult for example \cite{Peyre}.} As an instance of the conjecture, Schanuel (\cite{Schanuel}) earlier proved the existence of an explicit real number $C = C(N, K)$ for which
\[\nu(\bbP^N_K, \cO(1),  K; c) \sim [K : \bbQ] (N+1) c + C. \]

\item Rather vaguely,\footnote{To the extent of our knowledge there is no precise conjecture on the behaviour of the growth of rational points for Calabi-Yau varieties.} logarithmic growth is expected for Calabi-Yau varieties. For example, let $A$ be an abelian variety and
$ r := \dim_{\bbQ} A(K) \otimes_{\bbZ} \bbQ$
its Mordell-Weil rank. Then, for any ample line bundle $L$ on $A$, there is a positive real number $C = C(A,K,L) > 0$ such that
\[ \nu(A, L, K; c) \sim \tfrac{r}{2} \log c  + C;\]
see \cite[Theorem B.6.3]{HindrySilverman}.

\item As soon as one enters the general type realm, the Lang-Vojta conjecture in the projective case predicts the existence of a non-empty open subset for which $K'$-rational points are only finitely many for any finite extension $K'$ of $K$---in other words, the counting function of rational points for such an open subset is eventually constant. This is the case for curves by a celebrated theorem of Faltings (\cite{FaltingsMordell}, \cite{FaltingsMordellErratum}).

Reformulating the Lang-Vojta conjecture in the non-compact case in terms of counting functions is less eloquent. Nonetheless, recall the finiteness of $S$-integral points for affine curves---a theorem of Siegel (\cite{Siegel})---and for (fine) moduli spaces of curves and of abelian varieties---the Shafarevi\v{c} conjecture, proved by Faltings in \emph{op.cit.}.
\end{itemize}

Above, with an abuse of notation, the chosen model $\cX$ and height $h$ have been dropped, for the asymptotic behaviour is independent from them. Also, the factor $[K:\bbQ]$ appears because of the normalization of height adopted in this paper. 

In another direction, Pila (\cite{Pila}) proves that, given integers $N, d \ge 1$ and a positive real number $\epsilon > 0$, there is a real number $C= C(N, d, \epsilon)$ such that, for each closed integral subvariety $X$ of $\bbP^N_{\bbQ}$ of degree $d$,
\[ \nu(X, \cO(1), \bbQ ; c)\le  (\dim X + \tfrac{1}{d}+ \epsilon)c + C. \]
See also Browning, Heath-Brown and Salberger (\cite{BrowningHeathBrownSalberger}), and Castryck, Cluckers, Dittmann and Nguyen (\cite{CastryckCluckersDittmanNguyen}).

\subsubsection{Linear growth} Here we will content ourselves with much cruder estimates---namely, the slope of the function $c \mapsto \nu(\cX, L, U, h, K; c)$,
\[ \grrat_K(X,L, U) := \limsup_{c\to \infty} \frac{\nu(\cX, L, U, h, K; c)}{c},\]
henceforth called the \emph{(linear) growth rate of rational points}.  Of course, as the notation suggests, the real number $\grrat_K(X,L, U)$ does not depend on the choice of the model $\cX$ nor on that of the height function $h$. 

Similarly, for a finite set $S$ of places of $K$ including the Archimedean ones, the real number
\[ \limsup_{c\to \infty} \frac{\nu(\cX, L, U, h, \cO_{K,S}; c)}{c} \]
measures the presence of the $S$-integral points on $U$. Again this does not depend on $\cX$ and $h$, but \emph{a priori} does depend on the set $S$. Taking the supremum ranging over all such finite sets of places $S$ allows to get rid of such a dependence and gives rise to an invariant
\[ \grint_K(X, L, U)\]
called the \emph{(linear) growth rate of integral points} of $U$ with respect to $X$ and $L$. 

When $U = X$, rational and integral points coincide, thus their growth rate do. As above, in such a case the redundant repetition of $X$ is discarded from notation.  

These growth rates furnish invariants way rougher than the asymptotic behaviour. To stress this, note that the following holds:

\begin{itemize}
\item $\grrat_K(\bbP^N_K, \cO(1)) = (N + 1)[K : \bbQ]$ for an integer $N \ge 1$;
\item $\grrat_K(A, L)= 0$ for an abelian variety $A$ over $K$ and an ample line bundle $L$ on $A$;
\item $\grint_K(X, L, U) = 0$ for a smooth projective curve $X$ over $K$, an ample line bundle $L$ on $X$ and an open subset $U$ of $X$, provided that $U$ is not isomorphic to the projective or the affine line over $K$.
\end{itemize}

In particular, linear growth rates cannot distinguish Calabi-Yau varieties from those of general type. Needless to say, zero linear growth rate is a much weaker condition than logarithmic growth or no growth at all.

\subsection{Large fundamental groups} Abandon the notation above and let $X$ be a normal integral variety over an algebraically closed field  $k$ of characteristic $0$. 

\subsubsection{Definition}

Adopting Koll\'ar's terminology (\cite{KollarPaper}, \cite{KollarBook}), the \'etale fundamental group of $X$ is \emph{large} if, for any closed positive-dimensional integral subvariety $Y$ of $X$ with normalization $f \colon \tilde{Y} \to Y$, the image of the induced map\footnote{We abusively drop the choice of a base-point for (\'etale) fundamental groups.} $\pi_1^\et(\tilde{Y}) \to \pi_1^\et(X)$ is infinite. (See also \cite{Campana}.)

\begin{remark}
Thanks to \cite[Proposition 2.9.1]{KollarPaper}, the \'etale fundamental group of $X$ is large if and only if, for any non-constant morphism $f \colon Y \to X$ with $Y$ integral normal, the image of the induced map $\pi_1^\et(Y) \to \pi_1^\et(X)$ is infinite. Furthermore, it is sufficient (thus equivalent) to test the latter condition only on smooth connected  curves $Y$.
\end{remark}

\subsubsection{Basic properties} The following are direct consequences of the definition and the above remark:
\begin{enumerate}
\item The class of (integral, normal) algebraic varieties with large \'etale fundamental group is closed under products. 

\item Given a quasi-finite morphism $f \colon X' \to X$ between integral normal algebraic varieties, if $X$ has a large \'etale fundamental group, then $X'$ has a large \'etale fundamental group too. The converse holds as soon as $f$ is finite \'etale.

\item The \'etale fundamental group of an isotrivial fibration whose base and fiber have large \'etale fundamental group is large.\footnote{Namely, let $f \colon X' \to X$ be a surjective morphism between integral normal varieties whose geometric fibers are all isomorphic to some fixed integral normal variety $F$.  If $X$ and $F$ have large \'etale fundamental group, then $X'$ has.}

\item Let $k'$ be an algebraically closed field extension of $k$. If the \'etale fundamental group of the variety $X'$ deduced from $X$ by extending scalars to $k'$ is large, then so is the one of $X$.\footnote{Applying the specialization map of \'etale fundamental groups to an exhaustive family of normal cycles (see \ref{sec:FamiliesNormalCycles}) permits to show that the converse holds when $X$ is proper. In the non-compact case we ignore whether having large \'etale fundamental group is a property compatible with extension of scalars.}

\item All smooth connected curves, except the affine and the projective line, have large \'etale fundamental group. 
\end{enumerate}

\begin{remark} Taking products of a suitable number of copies of an elliptic and of a curve of genus $\ge 2$ yields examples of smooth projective (connected) varieties $X$ with large \'etale fundamental group and any possible Kodaira dimension between $0$ and $\dim X$.
\end{remark}

\subsubsection{Comparison with the topological fundamental group} Over the complex numbers, the property of having large \'etale fundamental group  can be read off the usual topological fundamental group. 

For, upon fixing a point $x \in X(\bbC)$, the natural group homomorphism  \[i_X \colon \pi_1^\top(X(\bbC), x) \too \pi_1^\et(X, x)\] identifies the \'etale fundamental group with the profinite completion of the topological one. Let $\hat{X}$ be the topological cover of $X(\bbC)$ corresponding to the normal subgroup $\ker i_X$ of $\pi_1^\top(X(\bbC), x)$. 

\begin{proposition}[{\cite[Proposition 2.12.3]{KollarPaper}}] \label{Prop:TopologicalCharacterizationLargeFundamentalGroup}Suppose $X$ proper.\footnote{The properness assumption is missing in the statement of \emph{loc.cit.}, although crucially invoked in the proof---in the non-compact case the statement is false; see Remark \ref{Rmk:CounterexampleKollar}.} Then, the \'etale fundamental group of $X$ is large if and only if the complex analytic space $\hat{X}$ does not contain positive-dimensional compact complex analytic subspaces.
\end{proposition}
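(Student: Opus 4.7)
The plan is to prove the contrapositive equivalence: $\pi_1^\et(X)$ fails to be large if and only if $\hat{X}$ contains a positive-dimensional compact analytic subspace. First, a reformulation. Let $G := \im(i_X) \subseteq \pi_1^\et(X,x)$, so that $\hat{X}$ is the Galois cover of $X(\bbC)$ with group $G$. For an integral positive-dimensional closed subvariety $Y \subseteq X$ with normalization $f\colon \tilde{Y}\to Y$, write $G_Y := (i_X\circ f_*)(\pi_1^\top(\tilde{Y}(\bbC)))$. Since $i_{\tilde{Y}}$ has dense image and $\pi_1^\et(X)$ is profinite Hausdorff, $G_Y$ is dense in $f_*(\pi_1^\et(\tilde{Y}))$, so the latter is finite if and only if $G_Y$ is. In turn, $G_Y$ is finite if and only if the pullback cover $\hat{X}_{\tilde{Y}} := \hat{X}\times_{X(\bbC)}\tilde{Y}(\bbC)\to \tilde{Y}(\bbC)$ has a connected component that is a finite cover of $\tilde{Y}(\bbC)$; and since $X$ is proper, $\tilde{Y}(\bbC)$ is compact, so this is equivalent to having a compact connected component.

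For the easy direction, suppose $\pi_1^\et(X)$ is not large, so some $G_Y$ is finite. Pick a compact connected component $C$ of $\hat{X}_{\tilde{Y}}$; the holomorphic map $C\to\hat{X}$ is proper, and by Remmert's theorem its image is a compact analytic subset of $\hat{X}$, positive-dimensional because its further projection to $X(\bbC)$ equals $Y(\bbC)$.

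For the converse, let $Z \subseteq \hat{X}$ be a positive-dimensional compact analytic subspace, $Z_0$ an irreducible component, and $\tilde{Z}_0 \to Z_0$ its normalization. The image $\pi(Z_0)$ is a compact irreducible analytic subset of $X(\bbC)$ (Remmert), hence by GAGA---where the properness of $X$ is crucial---of the form $Y(\bbC)$ for a closed integral positive-dimensional subvariety $Y\subseteq X$. The universal property of normalization produces a holomorphic map $\tilde{Z}_0 \to \tilde{Y}(\bbC)$, which combined with $\tilde{Z}_0 \to Z_0 \hookrightarrow \hat{X}$ gives $g\colon \tilde{Z}_0 \to \hat{X}_{\tilde{Y}}$ with compact image surjecting onto $\tilde{Y}(\bbC)$. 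One then extracts a compact connected component: since $\tilde{Y}$ is normal, $\tilde{Y}(\bbC)$ is locally irreducible, and so is $\hat{X}_{\tilde{Y}}$ (being locally isomorphic to $\tilde{Y}(\bbC)$); hence each connected component of $\hat{X}_{\tilde{Y}}$ is irreducible. Decomposing $g(\tilde{Z}_0) = \bigcup_j V_j$ into irreducible components, at least one $V_j$ must surject onto the irreducible $\tilde{Y}(\bbC)$, forcing $\dim V_j = \dim\tilde{Y}$; as a top-dimensional irreducible analytic subset of the irreducible connected component $C$ of $\hat{X}_{\tilde{Y}}$ containing it, $V_j=C$, so $C$ is compact, $G_Y$ is finite, and $\pi_1^\et(X)$ is not large.

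The main difficulty is this last extraction step: passing from the existence of a compact analytic subset of $\hat{X}_{\tilde{Y}}$ surjecting onto $\tilde{Y}(\bbC)$ to the compactness of an entire connected component of $\hat{X}_{\tilde{Y}}$. The purely topological analog is false (think of the compact interval $[0,1]$ surjecting onto $S^1$ via $\bbR\to S^1$, without $\bbR$ being compact); what makes the analytic version work is the dimension-matching together with the local irreducibility afforded by the normality of $\tilde{Y}$.
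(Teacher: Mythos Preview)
The paper does not give its own proof of this proposition; it is stated with attribution to Koll\'ar \cite[Proposition 2.12.3]{KollarPaper} and no argument is supplied. So there is nothing in the present paper to compare your proof against.

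That said, your argument is correct and follows the natural strategy. A couple of points could be stated a bit more carefully. First, when you claim that $g(\tilde Z_0)$ surjects onto $\tilde Y(\bbC)$, this is not automatic from the surjectivity of $\tilde Z_0\to Y(\bbC)$, since $\tilde Y\to Y$ need not be injective; the clean justification is that the lifted map $\tilde Z_0\to\tilde Y$ is proper, its image is a closed analytic subset of the irreducible space $\tilde Y$ which maps onto $Y$ and hence has full dimension, so it must be all of $\tilde Y$. Second, in the extraction step you use that a compact analytic set has only finitely many irreducible components, so that ``some $V_j$ surjects onto the irreducible $\tilde Y(\bbC)$'' follows from the fact that $\tilde Y$ cannot be a finite union of proper closed analytic subsets; this is implicit but worth making explicit. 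With these clarifications the proof is complete, and your closing remark correctly identifies why normality of $\tilde Y$ (hence local irreducibility of the cover) is the key input allowing the passage from a compact analytic subset to a compact connected component.
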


The latter condition is satisfied, for instance, if the complex space $\hat{X}$ is holomorphically separable;\footnote{That is, given two distinct points $x, y$, there is a global holomorphic function $f$ such that $f(x) \neq f(y)$.} examples of holomorphically separable spaces are open subsets of $\bbC^n$ (or, more generally, open subsets of Stein spaces).

\begin{remark}

In view of the above characterization, it is useful to be able to determine $\hat{X}$. One idle (but useful) case is when $\hat{X}$ is itself a universal cover of $X(\bbC)$, which boils to down to saying that $i_X$ is injective.\footnote{An instance where this does not occur has been first observed by Toledo (\cite{Toledo}).} Recall that a group $\Gamma$ is said to be:
\begin{itemize}
\item \emph{linear} if it admits a faithful representation $\rho \colon \Gamma \to \GL(V)$, where $V$ is a finite-dimensional vector space over some field;
\item \emph{residually finite} if the natural map $\Gamma \to \hat{\Gamma}$, where $\hat{\Gamma}$ is its profinite completion, is injective.
\end{itemize}

A classical result of Malcev (\cite{Malcev}) states that a finitely generated linear group is residually finite. Gathering the above considerations, if the topological fundamental group $\pi_1^\top(X(\bbC), x)$ is linear, then $\hat{X}$ is a universal cover of $X(\bbC)$.
\end{remark}

\begin{example} \label{Ex:SemiAbelianBoundedSymm} The above remark can be applied to say that the \'etale fundamental group of $X$ is large when $X$ is
\begin{itemize}
\item an abelian variety, for it is the quotient of $\bbC^n$ by a lattice;
\item a quotient of a bounded symmetric domain in $\bbC^n$ by a torsion-free co-compact lattice of its biholomorphism group.
\end{itemize}

It follows that semi-abelian varieties, being a fibration over an abelian variety by a torus, have large \'etale fundamental group.
\end{example} 

\begin{remark} \label{Rmk:CounterexampleKollar} Having large \'etale fundamental group is \emph{not} a biholomorphic invariant of non-compact varieties, preventing Proposition \ref{Prop:TopologicalCharacterizationLargeFundamentalGroup} to hold true in such a case. 

For instance, let $G$ be the universal vector extension of a complex elliptic curve $E$. The exponential map $\Lie G \to G(\bbC)$ is a universal cover and its kernel $\Lambda$ is a rank $2$ free abelian group. The topological fundamental group of $G(\bbC)$ is naturally identified with $\Lambda$, and therefore residually finite. Despite $\hat{G} = \Lie G$ being a Stein space, the \'etale fundamental group of $G$ is \emph{not} large: the kernel of the natural projection $G \to E$ is the affine line, which is simply connected.

On the other hand, the discrete subgroup $\Lambda$ generates $\Lie G$ as a complex vector space, so that $G (\bbC) = \Lie G / \Lambda$ is biholomorphic to $(\bbC/ \bbZ)^2 \cong (\bbC^\times)^2$. The algebraic group $\Gm^{2}$ has large fundamental group, whence the sought-after example.
\end{remark}

\subsubsection{The role of local systems} Still under the assumption $k = \bbC$, a local system $\cL$ on $X(\bbC)$ with coefficients in some field is \emph{large} if, given a non-constant morphism $f \colon Y \to X$ with $Y$ a normal irreducible complex variety, the local system $f^\ast \cL$ has infinite monodromy. The \'etale fundamental group of a complex variety carrying a large local system is large. 

Local systems underlying variations of Hodge structures are the prominent example of large local systems.  More precisely, let $\cL$ be a real local system on $X(\bbC)$ underlying a polarizable variation of pure real Hodge structures (or, more generally, an admissible graded-polarizable variation of mixed real Hodge structures, \emph{cf.} \cite{SteenbrinkZucker}). Assume further that the associated period mapping has discrete fibers. Then, the theorem of the fixed part (see \cite{GriffithsPeriodsIII}, \cite{DeligneHodgeII}, \cite{Schmid} in the pure case, and \cite{SteenbrinkZucker} in the mixed case)  implies that $\cL$ is large.

\begin{example} \label{Ex:QuotientBoundedSymm} Mixed Shimura varieties carry a large local system coming from their interpretation as period spaces for   (graded polarized) integral mixed Hodge structures.
\end{example}

\begin{example} \label{Ex:ModuliSpaces} A full range of (fine) moduli spaces of polarized varieties (\emph{e.g.} smooth projective curves, Calabi-Yau varieties, most complete intersections) admit a large local system---namely, the one whose fiber at a point is the middle cohomology of the corresponding variety. Indeed, such a local a system is large when the `infinitesimal Torelli theorem' is satisfied, whence the above list;  see \cite{BeauvilleTorelli}.
\end{example}

\subsection{Main result} Time has come to state the main result of the present note:

\begin{maintheorem} \label{ThmIntro:MainThm} Let $K$ be a number field,  $X$ a projective variety over $K$, $L$ an ample line bundle over $X$, and $U$ an open subset of $X$ which is geometrically integral, normal, and whose geometric \'etale fundamental group is large.\footnote{That is, for an algebraic closure $\bar{K}$ of $K$, the variety $\bar{U}$ deduced from $U$ extending the scalars to $\bar{K}$ has large \'etale fundamental group.} Then,
\[ \grint_K(X, L, U) = 0.\]
\end{maintheorem}

Such a statement can be applied to varieties appearing in examples \ref{Ex:SemiAbelianBoundedSymm}, \ref{Ex:QuotientBoundedSymm} and \ref{Ex:ModuliSpaces}; in particular, this replies positively to a question raised in \cite[p.3-4, end of \S 1.1]{EllenbergLawrenceVenkatesh}.

Its proof relies on two independent ingredients. The first is of arithmetic nature and is a quite formal consequence of work on uniform bounds initiated by the determinant method of Heath-Brown (\cite{HeathBrown}), and pursued by many authors including Broberg (\cite{Broberg}), Salberger (\cite{Salberger}), and Chen (\cite{ChenCrelleI}, \cite{ChenCrelleII}):

\begin{theoremintro} \label{ThmIntro:RationalGrowthVarietiesSmallDegree} Let $X$ be an integral projective variety of dimension $n$ over a number field $K$, and $L$ an ample line bundle over $X$. Assume there is a closed subvariety $Z$ of $X$ and an integer $d \ge 1$ such that any positive-dimensional integral closed subvariety $Y$ of $X$ not contained in $Z$ satisfies $\deg(Y, L_{\rvert Y}) \ge d^{\dim Y}$. Then,
\[ \grrat_K(X, L, X \smallsetminus Z) \le \frac{n(n + 3)}{2 d} [K:\bbQ].\]
\end{theoremintro}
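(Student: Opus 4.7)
The plan is to argue by induction on $n = \dim X$. The base case $n = 1$ is the classical Heath-Brown bound on rational points of a projective curve (extended to number fields by Walsh and Chen); the inductive step rests on the global determinant method of Salberger (in the number-field form of Chen). After replacing $L$ by a positive tensor power $L^{\otimes m}$---which divides the growth rate by $m$ and multiplies $d$ by $m$, leaving both sides of the claimed inequality unchanged---one may assume that $L$ is very ample and induces a closed embedding $X \hookrightarrow \bbP^N_K$; enlarging $K$ to a finite extension (which rescales both $[K:\bbQ]$ and the point count accordingly), one may further assume that $X$ is geometrically integral. Write $f(n) := \tfrac{n(n+3)}{2d}[K:\bbQ]$ for the sought bound.

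\textbf{Induction.} For $n = 1$, $X$ is a projective curve with $\delta := \deg_L X \ge d$, and the Heath-Brown--Walsh--Chen estimate gives $\grrat_K(X, L) \le (2/\delta)[K:\bbQ] \le f(1)$. For $n \ge 2$, set $\delta := \deg_L X \ge d^n$ and apply Salberger--Chen: for any $\epsilon > 0$, there is an integer $D \ge 1$ such that for each $c \ge 1$ one finds a family $\Sigma_c$ of hypersurfaces of $\bbP^N$ of degree $\le D$, none containing $X$, with
\[ |\Sigma_c| \le C_\epsilon \exp\bigl(((n+1)/\delta^{1/n} + \epsilon)\, c\, [K:\bbQ]\bigr) \le C_\epsilon \exp\bigl(((n+1)/d + \epsilon)\, c\, [K:\bbQ]\bigr), \]
whose union contains every $K$-rational point of $X$ of height at most $c$. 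Decomposing each $H \cap X$ (for $H \in \Sigma_c$) into integral components furnishes a collection $\mathcal{Y}_c$ of integral closed subvarieties of $X$ of dimension $n-1$ and degree $\le D\delta$, with $|\mathcal{Y}_c| \le D\delta \cdot |\Sigma_c|$. Crucially, these $Y$ lie in a bounded family inside $\bbP^N_K$.

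\textbf{Conclusion and obstacle.} For each $Y \in \mathcal{Y}_c$ with $Y \not\subset Z$, every positive-dimensional integral subvariety of $Y$ not contained in $Z$ is itself a subvariety of $X$ not contained in $Z$, hence has $L$-degree $\ge d^{\dim}$; the inductive hypothesis then yields $\grrat_K(Y, L_{\rvert Y}, Y \smallsetminus Z) \le f(n-1)$, uniformly in $Y$ by the bounded-family property. Combining,
\[ \nu(X, L, X \smallsetminus Z, K; c) \le \log |\mathcal{Y}_c| + \max_Y \nu(Y, L_{\rvert Y}, Y \smallsetminus Z, K; c) \le \Bigl(\tfrac{n+1}{d}[K:\bbQ] + f(n-1) + 2\epsilon\Bigr) c + O(1); \]
dividing by $c$, passing to the limit and letting $\epsilon \to 0$ gives $\grrat_K(X, L, X \smallsetminus Z) \le \tfrac{n+1}{d}[K:\bbQ] + f(n-1) = f(n)$, the last equality being an immediate computation from the explicit formula $f(n) = \tfrac{n(n+3)}{2d}[K:\bbQ]$. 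The principal difficulty is to extract from the literature the correct form of the global determinant method over number fields---as a covering statement with a uniformly bounded auxiliary degree $D$ and with the sharp exponent $(n+1)/\delta^{1/n}$---and to verify that the inductive estimate on the points of $Y$ is indeed uniform across the bounded family $\mathcal{Y}_c$.
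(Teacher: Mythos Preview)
Your plan is essentially the paper's: reduce to $L$ very ample, embed in $\bbP^N_K$, and run an induction on $n$ using the determinant method (the paper cites Chen's Theorem~A, which is exactly the covering statement with exponent $(n+1)/\delta^{1/n}$ and bounded auxiliary degree that you invoke). The telescoping identity $f(n)=f(n-1)+\tfrac{n+1}{d}[K:\bbQ]$ is the same one the paper uses.

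There is one genuine gap. Chen's theorem only covers the \emph{regular} $K$-points of $X$: the hypersurfaces in $\Sigma_c$ contain $\{x\in X^{\reg}(K):h(x)\le c\}$, not all of $X(K)$. You therefore cannot conclude without separately bounding $\nu_K(X^{\sing}\smallsetminus Z,c)$. The paper handles this by showing (its Proposition~2.3, via Hilbert-scheme boundedness) that for subvarieties of $\bbP^N$ of degree $\le D$ both the number of irreducible components of $X^{\sing}$ and their individual degrees are bounded by a constant $R_D$ depending only on $N$ and $D$; one then applies the inductive hypothesis to each component of $X^{\sing}$ not contained in $Z$. This is not hard, but it is a necessary step you omitted.

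On the uniformity issue you flag at the end: you are right that this is the crux, and the paper's device for it is worth noting. Rather than appealing to an abstract bounded-family argument, the paper formulates and proves a sharper statement (its Theorem~3.1) asserting, for every $n$ and degree bound $D$, the existence of a constant $C_{n,D}=C_{n,D}(N,K,Z,\epsilon)$ such that $\nu_K(X\smallsetminus Z,c)\le c[K:\bbQ](1+\epsilon)\tfrac{n(n+3)}{2d}+C_{n,D}$ for \emph{every} integral $X\subset\bbP^N_K$ of dimension $n$ and degree $\le D$ satisfying the subvariety-degree hypothesis. This makes the induction self-contained: the components of each $H\cap X$ have degree $\le DB_D$, so one feeds them back into the same statement at dimension $n-1$ with the degree bound $DB_D$. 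Incidentally, the paper starts the induction at $n=0$ (a point), which avoids having to invoke a separate curve estimate; and it never passes to a finite extension of $K$ to force geometric integrality---that step in your outline is unnecessary (Chen's theorem applies to integral, not necessarily geometrically integral, subvarieties) and would in fact weaken the bound, since it inflates $[K:\bbQ]$ on the right-hand side.
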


On the other hand, the second one is purely geometric and already appears in arguments of the first-named author (\cite{Brunebarbe}):

\begin{theoremintro} \label{ThmIntro:GeometricPart} Let $X$ be a normal integral projective variety over an algebraically closed field of characteristic $0$, $L$ an ample line bundle over $X$, and $U$ a non-empty open subset of $X$ whose \'etale fundamental group is large.

Then, given an integer $d \ge 1$, there is a finite surjective map $\pi \colon X' \to X$ with $X'$ normal integral such that $\pi$ is \'etale over $U$ and, for each positive-dimensional integral closed subvariety $Y'$ of $X'$ meeting $\pi^{-1}(U)$, 
\[ \deg(Y', \pi^\ast L_{\rvert Y'}) \ge d.\]
\end{theoremintro}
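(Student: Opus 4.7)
The plan is to reduce to a uniformity question on a bounded family of ``bad'' subvarieties and to tackle it by Noetherian induction using the largeness hypothesis on $\pi_1^{\et}(U)$.

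First, I would fix $d \geq 1$ and consider the set $\mathcal{F}_d$ of positive-dimensional integral closed subvarieties $Y \subseteq X$ meeting $U$ with $\deg(Y, L|_Y) < d$. The ampleness of $L$ makes $\mathcal{F}_d$ a bounded family, parametrized by finitely many locally closed subschemes $T_1, \ldots, T_r$ of a Hilbert scheme of $X$, with universal families $\mathcal{Y}_i \to T_i$. For any finite Galois surjective morphism $\pi \colon X' \to X$ \'etale over $U$ with group $G$, any $Y \in \mathcal{F}_d$ with normalization $\tilde{Y}$, and any irreducible component $Y'$ of $\pi^{-1}(Y)$ meeting $\pi^{-1}(U)$, the components of $\pi^{-1}(Y) \cap \pi^{-1}(U)$ are permuted transitively by $G$, so each has degree $|H_Y|$ over $Y$, where $H_Y := \im\bigl(\pi_1^{\et}(\tilde{Y} \cap U) \to G\bigr)$. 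Hence
\[ \deg(Y', \pi^\ast L|_{Y'}) \;=\; |H_Y| \cdot \deg(Y, L|_Y) \;\geq\; |H_Y|. \]
For $Y \notin \mathcal{F}_d$ the left-hand side is automatically $\geq d$, so the problem reduces to producing a single finite Galois \'etale cover with $|H_Y| \geq d$ for every $Y \in \mathcal{F}_d$.

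I would then prove the following statement by Noetherian induction on closed subsets $T_0 \subseteq T := \bigsqcup_i T_i$: there is a finite Galois \'etale cover of $U$ with group $G_{T_0}$ such that $|H_Y| \geq d$ in $G_{T_0}$ for every $Y \in \mathcal{F}_d$ parametrized by a point of $T_0$. For the inductive step, pick a maximal-dimensional irreducible component $T_0' \subseteq T_0$ and, after normalizing and shrinking, consider the flat family of normalized subvarieties $\tilde{\mathcal{V}}_{T_0'} \to T_0'$ restricted to the preimage of $U$. Since $\tilde{\mathcal{V}}_{T_0'}$ is an integral normal variety mapping non-constantly to $U$, the largeness of $\pi_1^{\et}(U)$ furnishes a finite Galois \'etale cover $U_0 \to U$ with group $G_0$ for which the image of $\pi_1^{\et}(\tilde{\mathcal{V}}_{T_0'}) \to G_0$ has order $\geq d$. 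A spreading-out argument would then upgrade this to $|H_{Y_t}| \geq d$ for every $t$ in a dense open $T_0'^{\circ} \subseteq T_0'$. Applying the induction hypothesis to the proper closed subset $T_0 \setminus T_0'^{\circ}$ and taking a Galois \'etale cover of $U$ dominating both yields the required cover for $T_0$.

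The main obstacle will be this spreading-out step, since a priori the image of $\pi_1^{\et}$ of a geometric fiber in a finite quotient of $\pi_1^{\et}(U)$ can be strictly smaller than the image of the total family. My plan is to handle this by enlarging the cover $U_0$ so as to reflect the Stein factorization of the pullback $\tilde{\mathcal{V}}_{T_0', 0} \to T_0'$: after replacing $T_0'$ by the appropriate finite cover---corresponding to a finite \'etale enlargement of $U_0 \to U$ compatible with the family structure---the connected components of the pullback will have geometrically connected fibers over the new base, whereupon the orbit--stabilizer analysis of the $G_0$-action identifies $|H_{Y_t}|$ with the generic value on a dense open. Running the Noetherian induction with $T_0 = T$ finally yields a finite Galois \'etale cover $U' \to U$ with the required property, and the normalization $\pi \colon X' \to X$ of $X$ in the function field of $U'$ is the desired finite surjective morphism.
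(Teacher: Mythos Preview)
Your overall strategy is on the right track---reducing to a bounded family of low-degree subvarieties, using the projection formula to translate the degree bound into a lower bound on $|H_Y|=|\im(\pi_1^\et(\tilde Y\cap U)\to G)|$, and then seeking a single finite Galois cover that works uniformly---and this is essentially how the paper proceeds as well. The gap is in the inductive step, precisely where you flag it.

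You apply largeness to the \emph{total space} $\tilde{\mathcal V}_{T_0'}$ and obtain a quotient $G_0$ with $|\im(\pi_1^\et(\tilde{\mathcal V}_{T_0'})\to G_0)|\ge d$. But the quantity you must control is $|H_{Y_t}|$, the image of a single fibre, and this is in general strictly smaller: the quotient of the former by the latter receives a surjection from $\pi_1^\et(T_0')$. Your Stein-factorization manoeuvre does establish that $t\mapsto |H_{Y_t}|$ is constant, say equal to $c(G_0)$, on a dense open of $T_0'$, but it gives no reason why $c(G_0)\ge d$. Replacing $T_0'$ by a finite \'etale cover leaves the fibres---and hence $|H_{Y_t}|$---unchanged; and ``enlarging $U_0$'' is precisely the step where a new $G_0$ must be chosen, for which you would need the image of $\pi_1^\et$ of the \emph{geometric generic fibre} in $\pi_1^\et(U)$ to be infinite. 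That is largeness after base change to $\overline{k(T_0')}$, which for non-proper $U$ the paper itself records as open (see the footnote to item (4) of the basic properties of large fundamental groups).

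The paper sidesteps this by passing to $\bbC$ (Lefschetz) and refining the parameter space so that the family of normalized cycles becomes a \emph{topological} fibre bundle on each stratum (Proposition~\ref{Prop:MakingAFamilyOfCycleTopologicallyTrivial}). On such a stratum the conjugacy class of $\im(\pi_1^\et(\tilde V_t)\to\pi_1^\et(U))$ is genuinely constant---in $\pi_1^\et(U)$ itself, not merely in a fixed finite quotient---so only finitely many conjugacy classes $H_1,\dots,H_r$ arise from cycles of bounded degree (Proposition~\ref{Prop:FinitenessConjugationClasses}). Largeness, now applied at closed points where it is given, makes each $H_i$ infinite, and residual finiteness of the profinite group $\pi_1^\et(U)$ then yields a single quotient in which every $H_i$ has image of order $\ge d$. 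Your Noetherian induction can be repaired by importing exactly this topological constancy at each step (take the dense open to be a fibre-bundle locus, then apply largeness at a closed point therein); but as written, the purely algebraic fix does not close the gap.
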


(See \ref{sec:Degree} for a reminder on the degree of a line bundle.)

\subsubsection*{Acknowledgements} We warmly thank Pascal Autissier and Ariyan Javanpeykar for their useful comments on the first version of this paper.

\subsubsection*{Organization of the paper} Introduction left aside, the paper has three sections, respectively dealing with the proof of Theorem \ref{ThmIntro:RationalGrowthVarietiesSmallDegree}, Theorem \ref{ThmIntro:GeometricPart} and the Main theorem.

\subsubsection*{Conventions} A \emph{variety} over a field $k$ is a separated finite type $k$-scheme. 

\section{Geometry}

\subsection{Degree of the singular locus} 

\subsubsection{Degree} \label{sec:Degree} For a proper variety $X$ over a field $k$ together with an ample line bundle $L$, let
\[ \deg(X, L) := (\dim X)! \lim_{i \to \infty} \frac{\dim_k \Gamma(X, L^{\otimes i})}{i^{\dim X}}.\]

The theory of Hilbert polynomials shows that such a limit exists and is a positive rational number. When $X$ is integral, the asymptotic version of Riemann-Roch states
\[ \deg(X, L) = L^n,\]
where $n= \dim X$ and $L^n$ is the top self-intersection of $L$. In particular, $\deg(X, L)$ is a positive integer.

\begin{lemma} \label{Lemma:DegreeAndBirationalMorphisms} Let $\pi \colon Y \to X$ be a finite morphism between proper varieties over $k$ and $L$ an ample line bundle over $X$. If there exists a scheme-theoretically dense open subset $U$ of $X$ such that $\pi^{-1}(U) \to U$ is an isomorphism, then
\[ \deg(Y, \pi^\ast L) = \deg(X, L).\]
\end{lemma}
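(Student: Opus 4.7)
The plan is to compare the Hilbert functions of $L$ on $X$ and $\pi^\ast L$ on $Y$ using the projection formula. First I would record that since $\pi$ is finite the pullback $\pi^\ast L$ is ample on the proper variety $Y$; moreover $\pi$ is automatically surjective (its image is closed and contains the topologically dense subset $U$), so $\dim Y = n := \dim X$. Using that $\pi_\ast$ is exact for affine (in particular finite) morphisms, the projection formula gives
\[ \Gamma(Y, \pi^\ast L^{\otimes i}) = \Gamma(X, \pi_\ast \cO_Y \otimes L^{\otimes i}), \]
so the entire question is transferred to $X$.

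The key geometric input is that the natural unit map $\cO_X \to \pi_\ast \cO_Y$ is an isomorphism over $U$. Since $U$ is scheme-theoretically dense, its kernel $\cK$---being a subsheaf of $\cO_X$ that vanishes after restriction to $U$---must already be zero. Denoting by $\cQ$ the cokernel, we obtain a short exact sequence
\[ 0 \too \cO_X \too \pi_\ast \cO_Y \too \cQ \too 0, \]
with $\cQ$ supported on the closed subset $X \smallsetminus U$. Scheme-theoretic density implies topological density of $U$, hence $U$ meets every irreducible component of $X$, and so $\dim \Supp \cQ < n$.

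To finish, I would invoke the standard consequence of Hilbert-polynomial theory (plus Serre vanishing) that for any coherent sheaf $\cF$ on $X$ with $\dim \Supp \cF < n$ one has $\dim_k \Gamma(X, \cF \otimes L^{\otimes i}) = O(i^{n-1})$. Tensoring the sequence above by $L^{\otimes i}$ and taking global sections for $i$ large---so that $H^j(X, L^{\otimes i})$ vanishes for $j \ge 1$ by Serre vanishing on $X$ and, via the finiteness of $\pi$ and the ampleness of $\pi^\ast L$, so does $H^j(X, \pi_\ast \cO_Y \otimes L^{\otimes i}) = H^j(Y, \pi^\ast L^{\otimes i})$---yields
\[ \dim_k \Gamma(Y, \pi^\ast L^{\otimes i}) - \dim_k \Gamma(X, L^{\otimes i}) = \dim_k \Gamma(X, \cQ \otimes L^{\otimes i}) = O(i^{n-1}). \]
Dividing by $i^n$ and multiplying by $n!$, the error term disappears in the limit, producing $\deg(Y, \pi^\ast L) = \deg(X, L)$. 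The only subtle point is the vanishing of $\cK$, for which the scheme-theoretic (as opposed to merely topological) density hypothesis is exactly what is needed; the remaining steps are routine applications of the projection formula and asymptotic Riemann--Roch.
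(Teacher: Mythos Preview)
Your proof is correct and follows essentially the same route as the paper: both use scheme-theoretic density to show that $\cO_X \to \pi_\ast \cO_Y$ is injective, observe that the cokernel is supported on $X \smallsetminus U$ (hence in dimension $< n$), and then compare the Hilbert functions via Serre vanishing for $H^1(X, L^{\otimes i})$. Your write-up is slightly more explicit (spelling out the projection formula, the surjectivity of $\pi$, and the equality $\dim Y = \dim X$), and the extra vanishing you invoke for $H^j(X, \pi_\ast \cO_Y \otimes L^{\otimes i})$ is harmless but unnecessary---only $H^1(X, L^{\otimes i}) = 0$ is needed to get the exact sequence on global sections.
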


\begin{proof} Since the open subset $U$ is scheme-theoretically dense, the natural map $\phi \colon \cO_X \to \pi_\ast \cO_Y$ is injective. The support of the coherent $\cO_X$-module $F = \coker \phi$ is contained in the closed subset $X \smallsetminus U$, thus in particular of dimension $< \dim X$. For $i \ge 1$ big enough, the cohomology group $\rH^1(X, L^{\otimes i})$ vanishes, yielding a short exact sequence
\[ 0 \too \Gamma(X, L^{\otimes i}) \too \Gamma(Y, \pi^\ast L^{\otimes i}) \too \Gamma(X, F \otimes  L^{\otimes i}) \too 0.\]
Since the support of $F$ has dimension $< \dim X$,
\[ \lim_{i \to \infty} \frac{\dim \Gamma(X, F \otimes  L^{\otimes i})}{i^{\dim X}} = 0.\]
The dimensions of $X$ and $Y$ being the same, the result follows.
\end{proof}

\begin{lemma} \label{Lemma:DegreeIrreducibleComponents} Let $X$ be an equidimensional proper variety over $k$, $L$ an ample line bundle. Then,
\[ \deg (X, L) \ge \sum_{X'} \deg(X', L_{X'}),\]
the sum ranging on the irreducible components of $X$ endowed with the reduced structure. Moreover, equality holds if $X$ is reduced.
\end{lemma}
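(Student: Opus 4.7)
The plan is to read off $\deg(X,L)$ as the leading coefficient, times $(\dim X)!$, of the Hilbert polynomial $P_F(i) := \chi(X, F \otimes L^{\otimes i})$ taken at $F = \cO_X$; by Serre vanishing this polynomial coincides with $i \mapsto \dim_k \Gamma(X, F \otimes L^{\otimes i})$ for $i \gg 0$, so the definition of $\deg(X,L)$ given in the paper agrees with this. Two basic properties of $P_F$ will do the bookkeeping: Hilbert polynomials are additive on short exact sequences of coherent sheaves, and $\deg P_F \le \dim \Supp F$, so a sheaf with lower-dimensional support contributes nothing to the leading coefficient.

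First I would reduce to the reduced case. Let $\cI \subset \cO_X$ be the nilradical and consider
\[ 0 \too \cI \too \cO_X \too \cO_{X_{\mathrm{red}}} \too 0, \]
which yields $P_{\cO_X} = P_{\cI} + P_{\cO_{X_{\mathrm{red}}}}$. For $i \gg 0$, $P_{\cI}(i) = \dim_k \Gamma(X, \cI \otimes L^{\otimes i}) \ge 0$, so the leading coefficient of $P_{\cI}$ is non-negative. Since $\dim X = \dim X_{\mathrm{red}}$, comparing top-degree coefficients gives $\deg(X,L) \ge \deg(X_{\mathrm{red}}, L|_{X_{\mathrm{red}}})$, with equality whenever $X$ is itself reduced (in which case $\cI = 0$).

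Next assume $X$ reduced with irreducible components $X_1, \dots, X_r$, each of dimension $n := \dim X$ by equidimensionality. The canonical map $\phi \colon \cO_X \to \bigoplus_j \iota_{j,*} \cO_{X_j}$, with $\iota_j \colon X_j \hookrightarrow X$ the inclusions, is injective because $X$ is reduced. Its cokernel $Q$ is supported on the union $\bigcup_{i \ne j}(X_i \cap X_j)$, and because the $X_i$ are distinct irreducible closed subsets of the same dimension $n$, each $X_i \cap X_j$ has dimension strictly less than $n$ (otherwise $X_i \subset X_j$, forcing $X_i = X_j$). Hence $\deg P_Q < n$ and $P_Q$ contributes nothing at degree $n$. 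Taking leading coefficients in $0 \to \cO_X \to \bigoplus_j \iota_{j,*}\cO_{X_j} \to Q \to 0$ yields $\deg(X,L) = \sum_j \deg(X_j, L|_{X_j})$, the factorials matching thanks to equidimensionality.

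The only genuine verification is that the support of $Q$ really has dimension $< n$, which uses reducedness (for injectivity of $\phi$) together with equidimensionality (to rule out that two distinct top-dimensional components meet along a top-dimensional locus). Everything else is formal manipulation of Hilbert polynomials, combining the two steps to obtain the inequality, with equality under the reducedness hypothesis.
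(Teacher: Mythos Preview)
Your proof is correct and follows essentially the same approach as the paper. The paper likewise first passes to $X_{\mathrm{red}}$ (using surjectivity of $\Gamma(X,L^{\otimes i})\to\Gamma(X_{\mathrm{red}},L^{\otimes i})$ for $i\gg 0$, which is the same content as your exact sequence) and then, for the reduced case, invokes Lemma~\ref{Lemma:DegreeAndBirationalMorphisms} applied to the finite map $X_1\sqcup\cdots\sqcup X_r\to X$; your argument simply unwinds that lemma's proof in this special case, phrasing things via additivity of Hilbert polynomials rather than direct limits of global sections.
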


\begin{proof} One reduces first to $X$ reduced, as for $i \ge 0$ big enough, the restriction map $\Gamma(X, L^{\otimes i}) \to \Gamma(X_{\red}, L^{\otimes i})$ is surjective. Under the assumption of $X$ being reduced, let $X_1, \dots, X_n$ be the irreducible components of $X$. Lemma \ref{Lemma:DegreeAndBirationalMorphisms}, applied to the finite morphism $X_1 \sqcup \cdots \sqcup X_n \to X$, gives the desired result.
\end{proof}

\subsubsection{Degree in families} Let $f \colon X \to S$ be a proper morphism between algebraic varieties over a field $k$ and $L$ a relatively ample line bundle on $X$. Consider:
\begin{itemize}
\item the function $\delta_{X/S, L} \colon S \to \bbN$,
\[ s \longmapsto \max \{ \deg (Z, L_{\rvert Z}) : Z \textup{ irreducible component of } X_s \},\]
where $Z$ is endowed with its reduced structure, and 
\item the function $\iota_{X/S} \colon S \to \bbN$ associating to $s \in S$ the number of irreducible components of $X_{\bar{s}}$ where $\bar{s}$ is a geometric point over $s$.
\end{itemize}

\begin{lemma} \label{Lemma:DegreeInFamily} Let $f \colon X \to S$ be a proper morphism between varieties over $k$ and $L$ relatively ample line bundle on $X$. 

Then, the functions $\delta_{X/S, L}, \iota_{X/S} \colon S \to \bbN$ are bounded above.
\end{lemma}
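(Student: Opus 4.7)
The plan is by Noetherian induction on $S$: it is enough to exhibit a dense open $V \subseteq S$ on which both $\iota_{X/S}$ and $\delta_{X/S, L}$ are bounded, since applying the induction hypothesis to $f^{-1}(S \smallsetminus V) \to S \smallsetminus V$ then concludes.

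Before invoking the induction, I would make a few reductions. As both functions depend only on the reduced irreducible components of geometric fibers, we may assume $X$ is reduced. Decomposing $X = X_1 \cup \cdots \cup X_m$ into irreducible components, every irreducible component of $X_s$ (endowed with its reduced structure) is an irreducible component of some $X_{i, s}$, which gives
\[ \iota_{X/S}(s) \le \sum_i \iota_{X_i/S}(s) \quad \text{and} \quad \delta_{X/S, L}(s) \le \max_i \delta_{X_i/S, L_{\rvert X_i}}(s). \]
Each $X_i \to S$ is proper and $L_{\rvert X_i}$ is relatively ample, so we may assume $X$ integral, and analogously (decomposing $S$ and restricting) $S$ integral. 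Replacing $S$ by the closed image $f(X)$ we may moreover take $f$ surjective.

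Next, by generic flatness there is a dense open $V \subseteq S$ such that $f$ restricts to a flat morphism over $V$. Shrinking $V$ to a connected open, constancy of the Hilbert polynomial in flat families with relatively ample polarization (combined with Serre vanishing, which identifies it with $\dim_{\kappa(s)} \Gamma$ for large twists) provides a polynomial $P \in \bbQ[n]$ with $\dim_{\kappa(s)} \Gamma(X_s, L_{\rvert X_s}^{\otimes n}) = P(n)$ for every $s \in V$ and every $n \gg 0$. Its degree and its leading coefficient then yield constants $d = \dim X_s$ and $D = \deg(X_s, L_{\rvert X_s})$ independent of $s \in V$. Because $X$ and $S$ are now integral and $f$ is flat on $V$, the fiber-dimension formula forces every geometric fiber $X_{\bar s}$ (for $s \in V$) to be equidimensional of dimension $d$.

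At this point Lemma \ref{Lemma:DegreeIrreducibleComponents} applied to $X_{\bar s}$ gives
\[ D = \deg(X_{\bar s}, L_{\rvert X_{\bar s}}) \ge \sum_Z \deg(Z, L_{\rvert Z}), \]
where $Z$ ranges over the irreducible components of $X_{\bar s}$ with reduced structure. Every summand is a positive integer, hence simultaneously $\iota_{X/S}(s) \le D$ and $\delta_{X/S, L}(s) \le D$ on $V$, and Noetherian induction on $S \smallsetminus V$ concludes. The most delicate points are the chain of reductions that bring us to a setting where the flat fiber-dimension formula guarantees equidimensionality of geometric fibers; everything else is routine constructibility.
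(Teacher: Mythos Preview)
Your proof is correct and follows essentially the same route as the paper's: Noetherian induction on $S$, reduction to $X$ and $S$ integral with $f$ flat, constancy of the Hilbert polynomial, and then Lemma \ref{Lemma:DegreeIrreducibleComponents} applied to each (geometric) fibre to bound both functions by the common degree $D$. The only cosmetic differences are that the paper passes to integral $X$ via the closures of the irreducible components of the \emph{generic} fibre (rather than the global components of $X$) and shrinks $S$ further to kill higher cohomology uniformly, whereas you extract the degree directly from the Hilbert polynomial.
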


\begin{proof} The statement only depends on the reduced structure of $S$. Moreover, by treating separately each irreducible component, the scheme $S$ may be supposed to be integral. By Noetherian induction, it suffices to show the statement on a nonempty open subset of $S$. 

Let $\eta$ be the generic point of $S$ and $X_{\eta, 1}, \dots, X_{\eta, r}$ the irreducible components of $X_\eta$. For $i = 1, \dots, r$, let $X_i$ be the closure of $X_{\eta, i}$ in $X$. According to \cite[\href{https://stacks.math.columbia.edu/tag/054Y}{Lemma 054Y}]{stacks-project}, there is an open subset $S'$ of $S$ such that 
\[ f^{-1}(S') \subset X_1 \cup \cdots \cup X_r.\]

Up to treating each of the $X_i$ separately and up to replacing $S$ by $S'$, one may assume $X$ to be integral. Upon shrinking $S$, the morphism $f$ may be assumed to be flat (\cite[\href{https://stacks.math.columbia.edu/tag/052B}{Proposition 052B}]{stacks-project}). By flatness (\cite[\href{https://stacks.math.columbia.edu/tag/02JS}{Lemma 02JS}]{stacks-project}), the fibers of $f$ are then pure of dimension $d := \dim X - \dim S$.

The relative ampleness of $L$ implies the existence of an integer $i_0 \ge 1$ such that, for $q \ge 1$ and $i \ge i_0$, the higher direct image $\rR^q f_\ast L^{\otimes i}$ vanishes. The semi-continuity theorem (\cite[Theorem III.12.8]{HartshorneAlgebraicGeometry}) can be applied to say, for $i, q \in \bbN$, that the function $s \mapsto \dim_{\kappa(s)} \rH^q(X_s, L^{\otimes i})$ is upper semi-continuous. Thus, up to replacing $L$ by $L^{\otimes i_0}$ and $S$ by a nonempty open subset, one may assume, for $i \ge 1$, $q \ge 1$ and $s \in S$,
\[  \rH^q(X_s, L^{\otimes i})=0.\]
On the other hand, the Hilbert polynomial of $L_{\rvert X_s}$ does not depend on $s$, for the morphism $f$ is flat (\cite[Theorem 5.10]{FGAExplained}). Combined with the vanishing of higher cohomology, this implies that, for $i \in \bbN$, the function $s \mapsto \dim_{\kappa(s)} \Gamma(X_s, L^{\otimes i})$ is constant. As a consequence, so is the function
\[ s \longmapsto \deg(X_s, L_{\rvert X_s}) = d! \lim_{i \to\infty} \frac{\dim_{\kappa(s)} \Gamma(X_s, L^{\otimes i})}{i^d}. \]
For $s \in S$, the inequality
\[ \deg(X_s, L_{\rvert X_s}) \ge \sum_{Z} \deg(Z, L_{\rvert Z}), \]
where the sum ranges on the irreducible components of $X_s$ endowed with the reduced structure, shows that the function $\delta_{X/S, L}$ is bounded above.

On the other hand, the degree is invariant under extension of scalars, thus
\[ \deg(X_s, L_{\rvert X_s}) \ge \sum_{\bar{Z}} \deg(\bar{Z}, L_{\rvert \bar{Z}}), \]
where the sum ranges on the irreducible components of $X_{\bar{s}}$ endowed with the reduced structure, where $\bar{s}$ is a geometric point over $s$. Since $ \deg(\bar{Z}, L_{\rvert \bar{Z}})$ is a positive integer, the right-hand side of the above inequality is bounded below by $\iota_{X/S}(s)$. This shows that the function $\iota_{X/S}$ is bounded above.
\end{proof}

\begin{proposition} \label{Prop:DegreeSingularLocus} For integers $N, D \ge 1$, there is an integer $R_D = R_D(N)$ such that, for a field $k$, a closed subvariety $X$ of $\bbP^N_k$ of degree $\le D$, the following statements hold:
\begin{enumerate}
\item the number of irreducible components of $X$ is $\le R_D$;
\item any irreducible component $Z$ of its singular locus $X^{\sing}$ (endowed with the reduced structure) has degree $\deg(Z, \cO_{\bbP^N}(1)) \le R_D$;
\item for an algebraic closure $\bar{k}$ of $k$, the number of irreducible components of $X^{\sing}_{\bar{k}}$ is $\le R_D$.
\end{enumerate}
\end{proposition}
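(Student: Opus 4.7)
The plan is to exploit the boundedness of the family of closed subvarieties of $\bbP^N$ of degree at most $D$, and then to apply Lemma \ref{Lemma:DegreeInFamily}, adapted to a base of finite type over $\bbZ$, both to the universal family and to its relative singular locus. First I would exhibit a Noetherian scheme $S$ of finite type over $\Spec \bbZ$ together with a closed subscheme $\cX \subset \bbP^N_S$ proper over $S$, such that every closed subvariety $X \subset \bbP^N_k$ of degree $\le D$ (over any field $k$) appears as a fiber of $\cX / S$. Concretely, $S$ can be taken as a finite disjoint union $\bigsqcup_i \Hilb^{P_i}(\bbP^N_\bbZ)$ over the finite set of Hilbert polynomials $P_1, \ldots, P_r$ admissible for reduced closed subschemes of $\bbP^N$ of degree $\le D$, the finiteness stemming from uniform Castelnuovo--Mumford regularity bounds.

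Having this parametrization in hand, I would then repeat the argument of Lemma \ref{Lemma:DegreeInFamily} with the base $S$: by Noetherian induction on $S$ and generic flatness, one obtains uniform bounds on $\delta_{\cX/S,\cO(1)}$ and $\iota_{\cX/S}$. Specializing at the point parametrizing a given $X$ bounds the number of geometric irreducible components of $X$, and hence of $X$ itself, which yields~(1). For (2) and (3), I would apply the same procedure to the relative singular locus: stratify $S$ into finitely many locally closed subsets $T$ over which $\cX_T \to T$ is flat with geometrically reduced fibers, and define $\cY_T \subset \cX_T$ as the locus where $\cX_T \to T$ fails to be smooth. The formation of $\cY_T$ then commutes with base change, so $(\cY_T)_{\bar s} = (\cX_{\bar s})^{\sing}$ for every geometric point $\bar s$ of $T$. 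Applying the previous step to $\cY_T \to T$ with the relatively ample line bundle $\cO(1)_{\rvert \cY_T}$ then bounds the degree and the number of geometric irreducible components of the singular loci of fibers; $R_D$ is the maximum of the bounds thus obtained over the finitely many strata.

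The main obstacle is the singular-locus step, specifically ensuring that $\cY_T$ really computes $(\cX_{\bar s})^{\sing}$ for every $\bar s$. Over strata where the fibers fail to be geometrically reduced---which can happen over non-perfect residue fields in positive characteristic---this identification may break down. The remedy is that such strata form a proper closed subset of $S$, so a further round of Noetherian induction handles them: since dimensions strictly decrease, the induction terminates and a uniform bound is maintained throughout.
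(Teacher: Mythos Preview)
Your approach is essentially the paper's: reduce to finitely many Hilbert polynomials (the paper invokes SGA6, Exp.~XIII, Cor.~6.11(ii), rather than Castelnuovo--Mumford regularity, but either works), pass to the universal family $X \to S$ over $S = \Hilb^P_{\bbP^N_\bbZ}$, and apply the argument of Lemma~\ref{Lemma:DegreeInFamily} first to that family for~(1) and then to its relative singular locus for~(2) and~(3).

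Where you diverge is in the singular-locus step, and there you make things harder than necessary. The paper simply takes $X^{\sing} \subset X$ to be the locus where $\Omega_{X/S}$ has rank $\ge n+1$, i.e.\ the zero locus of the appropriate Fitting ideal of $\Omega_{X/S}$. Since both relative differentials and Fitting ideals commute with arbitrary base change, the fibre $(X^{\sing})_s$ is exactly the non-smooth locus of $X_s$ over $\kappa(s)$ for every $s \in S$ --- no stratification, no extra flatness hypothesis (the universal family is already flat), and no geometric-reducedness hypothesis is required. Your worry about fibres that fail to be geometrically reduced is therefore misplaced: over such a fibre the non-smooth locus is simply all of $X_s$, and the Fitting-ideal description still records this correctly. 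The extra layer of Noetherian induction you propose as a remedy is unnecessary and, as written, vague about what the inductive hypothesis would actually be.
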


\begin{proof} According to \cite[Exp. XIII, Corollaire 6.11 (ii)]{SGA6}, it suffices to prove the statement when the subvarieties in question have a fixed Hilbert polynomial $P \in \bbQ[z]$. For, consider the Hilbert scheme \[S = \Hilb_{\bbP^N_\bbZ, \cO(1)}^P\] of the closed subschemes of $\bbP^N_\bbZ$ with Hilbert polynomial $P$ with respect to $\cO(1)$. Let $X \subseteq \bbP^N_S$ be the universal family. The morphism $\pi \colon X \to S$ induced by the second projection is proper and flat. Let $n$ be its relative dimension.

For (1), apply Lemma \ref{Lemma:DegreeInFamily} to the morphism $\pi \colon X \to S$ and the relatively ample line bundle $L:= \cO_{\bbP^N_S}(1)_{\rvert X}$. 

For (2) and (3), let $X^{\sing}$ be the closed subset in $X$ where the coherent $\cO_X$-module $\Omega_{X/S}$ has rank $\ge n + 1$. One concludes by applying Lemma \ref{Lemma:DegreeInFamily} to the morphism $\pi \colon X^{\sing} \to S$ and the relatively ample line bundle $L:= \cO_{\bbP^N_S}(1)_{\rvert X^{\sing}}$.
\end{proof}

\subsection{Families of normal cycles} \label{sec:FamiliesNormalCycles}Let $k$ be a field of characteristic $0$ and $X$ a geometrically integral normal variety over $k$. 

\subsubsection{Definition} Following Koll\'ar (\cite{KollarPaper}), a \emph{normal cycle} on $X$ is a finite morphism $f \colon Z \to X$  which is birational onto its image, where $Z$ is a geometrically integral normal variety. 

A \emph{family of normal cycles} on $X$ is the datum of morphism of $k$-schemes $\pi \colon \cZ \to S$ and $f \colon \cZ \to X$ where 
\begin{itemize}
\item $S$ is reduced and a countable disjoint union of varieties over $k$;
\item the morphism $\pi$ is separated, of finite type, flat and with geometrically integral normal  fibers;
\item for $s \in S$, the map $f_s \colon \cZ_s \to X_{\kappa(s)}$ is a normal cycle, where $\kappa(s)$ is the residue field at $s$.
\end{itemize}

\subsubsection{Exhaustive families} A family of normal cycles $(\pi \colon \cZ \to S, f \colon \cZ \to X)$ on $X$ is \emph{exhaustive}\footnote{Koll\'ar uses the name `weakly complete'.} if, given a field extension $k'$ of $k$ and a normal cycle $g \colon Z \to X_{k'}$, there is a unique $s \in S(k')$ such that $g = f_s$.

Given an exhaustive family of normal cycles $(\pi \colon \cZ \to S, f \colon \cZ \to X)$ on $X$ and an open immersion $j \colon U \to X$, the couple \[(\pi \colon \cZ \times_X U \to \pi(\cZ \times_X U), f \colon \cZ \times_X U \to U)\] is an exhaustive family of normal cycles on $U$.

\begin{proposition} \label{Prop:ExistenceExhaustiveFamily} If $X$ is projective, then there exists an exhaustive family of normal cycles $(\pi \colon \cZ \to S, f \colon \cZ \to X)$ on $X$ such that, for any ample line bundle $L$ on $X$ and any integer $d \ge 1$, the set 
\[ S_{L, d} := \{ s \in S : \deg(\cZ_s, f_s^\ast L) \le d \}\]
is a finite union of connected components of $S$ (thus of finite type).
\end{proposition}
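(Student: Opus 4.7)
The strategy is to produce the desired family from Hilbert schemes, stratifying them so that a simultaneous normalization exists over each stratum. Fix an ample line bundle $L_0$ on $X$. For each Hilbert polynomial $P \in \bbQ[z]$, let $H^P := \Hilb^{P}_{X, L_0}$ denote the Hilbert scheme of closed subschemes of $X$ with Hilbert polynomial $P$ relative to $L_0$; this is a projective $k$-variety, and the universal subscheme $\cW^P \subseteq H^P \times X$ is flat and proper over $H^P$. Let $V^P \subseteq H^P$ be the open reduced subvariety over which the geometric fibers of $\cW^P$ are integral; its openness is standard for flat proper families.

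Next, I would stratify each $V^P$ into finitely many locally closed reduced subvarieties $V^P_\alpha$ on each of which the normalization of the restricted universal family $\cW^P|_{V^P_\alpha}$ gives a flat proper morphism $\pi_\alpha \colon \cZ^P_\alpha \to V^P_\alpha$ with geometrically integral normal fibers, together with a finite birational morphism $\cZ^P_\alpha \to \cW^P|_{V^P_\alpha}$ whose restriction to each fiber is the normalization. Such a stratification exists by Noetherian induction thanks to the following characteristic-zero fact: if $\cW \to T$ is a flat proper morphism with geometrically integral fibers and $T$ is integral Noetherian, then there is a dense open $U \subseteq T$ over which the normalization $\widetilde{\cW} \to \cW$ becomes flat and its fibers agree with the normalizations of the fibers of $\cW \to T$. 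Setting $S := \bigsqcup_{P, \alpha} V^P_\alpha$, $\cZ := \bigsqcup_{P, \alpha} \cZ^P_\alpha$, and $f \colon \cZ \to X$ induced by projection, each fiber $f_s$ is finite, birational onto a geometrically integral subvariety of $X_{\kappa(s)}$, hence a normal cycle.

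For exhaustiveness, a normal cycle $g \colon Z \to X_{k'}$ factors as the normalization $Z \to W$ composed with the inclusion $W \hookrightarrow X_{k'}$, where $W := g(Z)$ is a geometrically integral closed subvariety. The subscheme $W$ defines a unique $k'$-point of some $V^P$, lying in a unique stratum $V^P_\alpha$, over which the fiber of $\cZ$ is precisely $Z$ by construction, giving the required unique $s \in S(k')$ with $f_s = g$.

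Finally, for the assertion on $S_{L, d}$: Lemma \ref{Lemma:DegreeAndBirationalMorphisms} gives $\deg(\cZ_s, f_s^\ast L) = \deg(f_s(\cZ_s), L_{\rvert f_s(\cZ_s)})$, and flatness of the Hilbert-scheme universal family makes this degree locally constant on $S$, so $S_{L, d}$ is a union of connected components of $S$. To see it is finite, choose an integer $m \ge 1$ with $mL - L_0$ ample; expanding $(mL)^{\dim W} = ((mL - L_0) + L_0)^{\dim W}$ as a sum of non-negative intersection numbers yields $\deg(W, L_{0\rvert W}) \le m^{\dim W} \deg(W, L_{\rvert W})$ for every positive-dimensional integral closed subvariety $W \subseteq X$. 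Thus points of $S_{L, d}$ parametrize subvarieties of $L_0$-degree at most $m^{\dim X} d$, and closed subschemes of $X$ with bounded $L_0$-degree realize only finitely many Hilbert polynomials (a classical consequence of bounds on Castelnuovo--Mumford regularity); so only finitely many $V^P$ contribute, and each is quasi-projective with finitely many strata and connected components. The main obstacle is the stratification step: it is here that characteristic zero enters essentially, ensuring that normalization commutes with base change on a dense open subset.
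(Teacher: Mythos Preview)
Your argument is correct and follows essentially the same route as the paper: build the family from the Hilbert scheme of $X$, pass to the locus of geometrically integral fibers, stratify to obtain a simultaneous normalization (the paper packages this step as Lemmas \ref{Lemma:GenericNormalization} and \ref{Lemma:BreakingIntoPieces}), and use boundedness of Hilbert polynomials for the finiteness of $S_{L,d}$. Your treatment of the last step is actually more explicit than the paper's: by fixing $L_0$ and comparing $L$-degree with $L_0$-degree via the expansion of $(mL)^{\dim W}$, you justify why only finitely many Hilbert polynomials occur, whereas the paper simply asserts that ``there are only finitely many connected components of the Hilbert scheme parameterizing varieties of degree $\le d$'' without spelling out the comparison between the variable $L$ and the polarization used to construct $\Hilb(X)$.
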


For the proof, which reproduces that of \cite[Proposition 2.4]{KollarPaper}, the following lemmas will be needed:

\begin{lemma} \label{Lemma:GenericNormalization}Let $\pi \colon Y \to S$ be a morphism between varieties over $k$ with  integral generic fibers and $S$ reduced. Let $\nu \colon \tilde{Y} \to Y$ the normalization of $Y$. 

Then, there is a dense open subset $S'$ of $S$ such that, for $s \in S'$, the induced morphism $\tilde{Y}_s \to Y_s$ is the normalization.
\end{lemma}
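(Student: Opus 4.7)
The plan is to reduce to the case where $S$ and $Y$ are both integral, then invoke generic flatness together with standard openness results for fibrewise geometric properties.

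First, let $S_1, \dots, S_r$ be the irreducible components of the reduced scheme $S$, with generic points $\eta_j$. Since $Y_{\eta_j}$ is integral for each $j$, exactly one irreducible component $Y_{(j)}$ of $Y$ dominates $S_j$; the other components of $Y$ have image contained in a proper closed subset of $S$, and the normalization $\tilde Y$ decomposes as $\bigsqcup_\alpha \tilde Y_\alpha$ over the reduced irreducible components $Y_\alpha$ of $Y$. After removing from $S$ both the pairwise intersections of the $S_j$ and the images of the non-dominant components of $Y$, the statement of the lemma reduces to its analogue over each $S_j^\circ$ separately, with $Y$ replaced by $Y_{(j)}$ and $\tilde Y$ by $\tilde Y_{(j)}$. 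So one may henceforth assume that both $S$ and $Y$ are integral, with generic point $\eta$.

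Next, since $Y$ is finite type over a field it is Nagata, so $\nu \colon \tilde Y \to Y$ is finite and $\tilde Y$ is integral and normal. Applying generic flatness to both $\pi$ and $\tilde \pi := \pi \circ \nu$, I shrink $S$ so that the two morphisms are flat with equidimensional fibres of dimension $d = \dim Y - \dim S$. The openness of fibrewise integrality and normality in flat finite-type families over a noetherian base (EGA~IV$_3$, Th\'eor\`emes 9.7.7 and 12.1.6) then lets me further shrink $S$ so that $Y_s$ is integral, $\tilde Y_s$ is integral, and $\tilde Y_s$ is normal for every $s$ in the remaining dense open of $S$.

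It remains to check birationality of $\nu_s$. Let $V \subset Y$ be the maximal open over which $\nu$ restricts to an isomorphism; since $\nu_\eta$ is the normalization of the integral scheme $Y_\eta$, the open $V$ meets $Y_\eta$ densely, so $Y \smallsetminus V$ has dimension strictly less than $\dim Y$. Upper semi-continuity of fibre dimension applied to $Y \smallsetminus V \to S$ then gives $\dim (Y \smallsetminus V)_s < d$ on a dense open of $S$, so that $V \cap Y_s$ is dense in $Y_s$ and $\nu_s$ is an isomorphism there. Combined with the previous paragraph, $\nu_s \colon \tilde Y_s \to Y_s$ is a finite birational morphism from an integral normal scheme to an integral scheme, hence the normalization of $Y_s$ by the universal property. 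The step I expect to be the main obstacle is the invocation of fibrewise normality, which rests on EGA~IV$_3$, \S12 and crucially requires flatness of $\tilde \pi$; the remaining pieces are routine consequences of generic flatness, constructibility, and semi-continuity of fibre dimension.
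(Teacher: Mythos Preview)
Your argument is correct and follows essentially the paper's route: reduce to a tractable case, use EGA constructibility/openness results to make $\tilde{Y}_s$ normal on a dense open, and then check birationality of $\nu_s$ separately. One overclaim to flag: EGA~IV$_3$,~9.7.7 treats \emph{geometric} irreducibility, and the hypothesis only gives $Y_\eta$ integral over $\kappa(\eta)$, not geometrically so; hence you cannot in general arrange $Y_s$ or $\tilde{Y}_s$ integral---already $Y=\Spec k[t,x]/(x^2-t)\to\bbA^1_k$ has reducible closed fibres over every nonzero square. Fortunately you only need $Y_s$ reduced and $\tilde{Y}_s$ normal, both of which do spread out from the generic fibre in characteristic~$0$, and ``finite birational from a normal scheme to a reduced scheme'' still characterizes the normalization, so the final step goes through with those weaker properties. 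Your treatment of birationality via the fibre dimension of $Y\smallsetminus V$ is in fact more careful than the paper's, which only records that $\tilde{\pi}(U)$ is constructible and contains the generic points---literally this yields $U_s\neq\varnothing$ rather than $U_s$ dense in $\tilde{Y}_s$; your dimension argument is the cleaner way to close that step.
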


\begin{proof} First, one may assume $Y$ to be reduced; for, the ideal sheaf of nilpotent elements of $Y$ is supported on a closed subset meeting no generic fiber of $\pi$.

Let $S_0$ be the open subset of $S$ such that the fibers of $\tilde{\pi} \colon \tilde{Y} \to S$ are (geometrically) normal (\cite[Corollaire 9.9.5]{EGA4III}). Note that $S_0$ contains each of the generic points of $S$ because the generic fibers of $\tilde{\pi}$ are normal.

The morphism $\nu$ is finite birational, the latter meaning that $\nu$ is an open immersion on a dense open subset $U$ of $\tilde{Y}$. For each $s \in S$, the induced map $\nu_s \colon \tilde{Y}_s \to Y_s$ is finite, whereas this will not be the case in the general for the `birational' property. To remedy that, remark that the image $\tilde{\pi}(U)$ is constructible and contains all of the generic points of $S$. Therefore, the subset $S_0 \cap \tilde{\pi}(U)$ contains a dense open subset $S'$.
\end{proof}

\begin{lemma} \label{Lemma:BreakingIntoPieces} Let $f \colon Y \to S$ be a morphism between varieties over $k$ with geometrically integral fibers and $S$ reduced. 

Then, there is a surjective immersion of varieties $\iota \colon S' \to S$ and a finite morphism $\nu \colon Y' \to Y \times_S S'$ such that the composite map $Y' \to S'$ is flat and, for each $s' \in S'$, the induced map $Y'_{s'} \to Y_{\iota(s')}$ is the normalization.
\end{lemma}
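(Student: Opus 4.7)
The plan is to proceed by Noetherian induction on $\dim S$, combining Lemma \ref{Lemma:GenericNormalization} with generic flatness in order to handle a dense open stratum, then recurse on the closed complement.

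First, treating each irreducible component of $S$ separately (and taking a disjoint union at the end), one may assume $S$ to be integral. Let $\nu \colon \tilde{Y} \to Y$ be the normalization of $Y$. The generic fibers of $Y \to S$ being geometrically integral (hence integral), Lemma \ref{Lemma:GenericNormalization} yields a dense open subset $U \subseteq S$ such that for every $s \in U$ the induced map $\tilde{Y}_s \to Y_s$ is the normalization. Since $\nu$ is finite, the restriction $\tilde{Y}_{\rvert U} \to Y_{\rvert U}$ is also finite. Applying generic flatness (\cite[\href{https://stacks.math.columbia.edu/tag/052B}{Proposition 052B}]{stacks-project}) to $\tilde{Y}_{\rvert U} \to U$, after further shrinking $U$ this morphism becomes flat. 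This produces the required data over $U$, with $\iota \colon U \hookrightarrow S$ an open immersion.

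The closed complement $Z := S \smallsetminus U$, endowed with the reduced structure, has strictly smaller dimension. The morphism $Y \times_S Z \to Z$ still has geometrically integral fibers (they are base-changes of fibers of $f$), so the inductive hypothesis applies: there exist a surjective immersion $\iota_Z \colon S'_Z \to Z$ and a finite morphism $\nu_Z \colon Y'_Z \to (Y \times_S Z) \times_Z S'_Z$ such that $Y'_Z \to S'_Z$ is flat and each $(Y'_Z)_{s'} \to Y_{\iota_Z(s')}$ is the normalization. Composing $\iota_Z$ with $Z \hookrightarrow S$, setting $S' := U \sqcup S'_Z$ with the obvious map to $S$, and setting $Y' := \tilde{Y}_{\rvert U} \sqcup Y'_Z$, gives the required data over $S$: finiteness and flatness are verified componentwise, and the fiber description is unchanged.

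The main thing to be careful about is the meaning of \emph{surjective immersion}: $S'$ is allowed to be a disjoint union of varieties, each of whose components embeds as a locally closed subvariety of $S$, the union of these images covering $S$ set-theoretically. With that convention the gluing step in the last paragraph is formal; the substantive content is entirely in Lemma \ref{Lemma:GenericNormalization} together with generic flatness.
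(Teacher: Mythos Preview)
Your proof is correct and follows the same approach as the paper's: Noetherian induction combining Lemma~\ref{Lemma:GenericNormalization} with generic flatness to handle a dense open stratum, then recurse on the complement. The only cosmetic difference is that the paper phrases the induction directly on $S$ reduced (invoking openness of the flat locus rather than generic flatness over an integral base), which sidesteps the need for your preliminary reduction to $S$ integral and automatically yields pairwise disjoint strata.
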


Note that the morphism $\iota$ being a surjective immersion simply means that $S'$ is the disjoint union of finitely many pairwise disjoint locally closed subsets of $S$ whose union is the whole $S$.

\begin{proof} Arguing by Noetherian induction, it suffices to construct a dense open subset $S_0$ of $S$ and a finite morphism $Y_0 \to Y \times_S S_0$ with the properties in the statement. This is obtained combining Lemma \ref{Lemma:GenericNormalization} with the openness of the locus where a morphism is flat.
\end{proof}

\begin{proof}[{Proof of Proposition \ref{Prop:ExistenceExhaustiveFamily}}] Let $H$ be a connected component of the Hilbert scheme $\Hilb(X)$ of $X$ endowed with the reduced structure. Let $u \colon \cU \to H$ be the base-change to $H$ of  the universal family of closed subschemes of $X$ parameterized by $\Hilb(X)$. 

The locus where the fibers of the map $u$ are geometrically integral is constructible (\cite[Th\'eor\`eme 9.7.7]{EGA4III}); let $H'$ be disjoint union of the irreducible components of such a constructible subset, again endowed with the reduced structure. The base-change $u' \colon \cU' \to H'$ of $u \colon \cU \to H$ along the immersion $H' \to H$ satisfies the hypotheses of Lemma \ref{Lemma:BreakingIntoPieces}. Applying it yields a surjective immersion $\iota \colon S_H \to H'$ and finite morphism $\nu \colon \cZ_H \to \cU' \times_{H'} S_H$ such that the composite map $\pi_H \colon \cZ_H \to S_H$ is flat with normal geometrically integral fibers and, for each $s \in S_H$, the induced morphism $\nu_{s} \colon \cZ_{H, s} \to \colon \cU'_{\iota(s)}$ is the normalization.

The sought-for exhaustive family of normal cycles on $X$ is
\[ 
\pi = \bigsqcup_{H} \pi_H \colon \cZ := \bigsqcup_{H} \cZ_H \too S := \bigsqcup_{H} S_H,
\]
the disjoint unions running over all the connected components of the Hilbert scheme of $X$ endowed with the reduced structure. 

For an ample line bundle $L$ and an integer $d \ge 1$ there are only finitely many connected components of the Hilbert scheme parameterizing varieties of degree $\le d$, whence the result.
\end{proof}

\begin{proposition} \label{Prop:MakingAFamilyOfCycleTopologicallyTrivial} Suppose $k = \bbC$ and $X$ projective. Let $(\pi \colon \cZ \to S, f \colon \cZ \to X)$ be an exhaustive family of normal cycles on $X$. 

Then, there is a surjective immersion of finite type $\iota \colon S' \to S$  of $\bbC$-schemes such that, for any connected component $T$ of $S$, the induced map $(\cZ' \times_{S'} T') (\bbC) \to T'(\bbC)$ is a topological fiber bundle, where $T' = \iota^{-1}(T)$ and $\cZ' = \cZ \times_S S'$.
\end{proposition}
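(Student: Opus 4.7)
The plan is to apply Verdier's stratification theorem together with Thom's first isotopy lemma to the morphism $\pi \colon \cZ \to S$. Verdier's theorem asserts that any morphism of complex algebraic varieties admits compatible Whitney stratifications of source and target by locally closed algebraic subvarieties, making it a stratified submersion; Thom's first isotopy lemma then ensures, when the morphism is proper, that the restriction to the preimage of each target-stratum is a topological fiber bundle over that stratum. Applying this componentwise to $\pi$ and assembling the pieces will yield the required $\iota \colon S' \to S$.

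First I would verify that $\pi$ is proper. By the essential uniqueness of an exhaustive family as the representing object of the moduli functor of normal cycles, I may assume that $(\pi, f)$ is the family constructed in the proof of Proposition~\ref{Prop:ExistenceExhaustiveFamily}. There, on each connected component $S_H$ of $S$, the morphism $\pi_H$ factors as the finite normalization map $\cZ_H \to \cU' \times_{H'} S_H$ followed by the base change to $S_H$ of the universal family over the Hilbert scheme of $X$. The projectivity of $X$ makes the universal family projective over each component of $\Hilb(X)$, hence proper; composing with a finite map gives that $\pi_H$, and a fortiori $\pi$, is proper.

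Next I would apply the above stratification package componentwise. Each connected component $T$ of $S$ is a complex variety of finite type, and $\pi_T \colon \cZ \times_S T \to T$ is proper. Verdier's theorem produces a finite Whitney stratification of $T$ by locally closed algebraic subvarieties $T_1, \ldots, T_r$, and Thom's first isotopy lemma ensures that for each $i$ the restriction $\pi_T^{-1}(T_i)(\bbC) \to T_i(\bbC)$ is a topological fiber bundle. Setting $T' := \bigsqcup_{i=1}^r T_i$ with its natural immersion into $T$, and then $S' := \bigsqcup_T T'$ over all connected components $T$ of $S$, the induced map $\iota \colon S' \to S$ is a surjective immersion, and it is of finite type since it is so on each connected component of $S$.

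The main obstacle, as I see it, is rigorously establishing the properness of $\pi$, since the statement concerns an arbitrary exhaustive family rather than the specific one constructed in Proposition~\ref{Prop:ExistenceExhaustiveFamily}. The cleanest route is to invoke the uniqueness up to unique isomorphism of the object representing a representable moduli functor. An alternative is a direct argument: the morphism $(f, \pi) \colon \cZ \to X \times S$ is quasi-finite (since each $f_s$ is finite) and one shows it is proper, whence finite, via the valuative criterion using the properness of $X$; composing with the projection $X \times S \to S$ then yields properness of $\pi$. Once properness is secured, the bulk of the work is subsumed by the Verdier--Thom stratification machinery.
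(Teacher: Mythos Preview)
Your approach via Verdier's stratification theorem and Thom's first isotopy lemma is correct and is precisely what the paper defers to: its proof is a bare citation of Koll\'ar's \cite[Proposition 2.4]{KollarPaper}, whose argument is exactly the one you outline.

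One comment on the properness of $\pi$, which you rightly flag as the only real issue. Your first route, via ``essential uniqueness of exhaustive families'', does not work as stated: the definition of \emph{exhaustive} pins down only the field-valued points of $S$, which is not enough to force an isomorphism of schemes (think of a cuspidal curve versus its normalization), so there is no representability statement to invoke. Your alternative route does work, though it needs a little more than the bare valuative criterion and the properness of $X$. Reducing to a DVR base $R$, the fibre $\cZ_K$ is geometrically integral and proper (being finite over the projective $X$); taking its closure in a Nagata compactification of $\cZ_R$ over $R$ gives an integral proper $R$-scheme $\cZ'_R$ containing $\cZ_R$ as a dense open, and Zariski's connectedness theorem forces the special fibre $\cZ'_{R,s}$ to be connected. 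Since $\cZ_{R,s}$ is open, nonempty (by flatness of $\pi$), and proper---hence closed---in $\cZ'_{R,s}$, one gets $\cZ_R = \cZ'_R$, so $\cZ_R$ is proper over $R$ and the valuative criterion is verified. Alternatively, one may simply note, as the paper implicitly does, that the proposition is only ever applied (in the proof of Proposition~\ref{Prop:FinitenessConjugationClasses}) to the specific family built in Proposition~\ref{Prop:ExistenceExhaustiveFamily}, for which properness is immediate from the construction.
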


\begin{proof} See the proof of \cite[Proposition 2.4]{KollarPaper}.
\end{proof}

\begin{proposition} \label{Prop:FinitenessConjugationClasses}Suppose $k$ is an algebraically closed subfield of $\bbC$. Let $L$ be an ample line bundle over $X$, $U$ a nonempty open subset of $X$ and $d \ge 1$ an integer. 

Then, up to conjugation, there are only finitely many subgroups of $\pi_1^\et(U)$ obtained as the image of $\pi^\et_1(f^{-1}(U)) \to \pi_1^\et(U)$ with $f \colon Z \to X$ a normal cycle such that 
\[  \deg(Z, f^\ast L) \le d. \]
\end{proposition}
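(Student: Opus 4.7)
The plan is to parameterize all normal cycles of $L$-degree at most $d$ by a finite-type algebraic family, trivialize the family topologically after base change to $\bbC$, and conclude that the image of $\pi_1^\et(f^{-1}(U))$ in $\pi_1^\et(U)$ depends on $f$ only through the connected component of the parameter space, of which there are finitely many.

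I will assume $X$ projective, as is the case in the intended applications. Applying Proposition \ref{Prop:ExistenceExhaustiveFamily} I obtain an exhaustive family $(\pi \colon \cZ \to S, f \colon \cZ \to X)$ of normal cycles on $X$ such that $S_{L, d}$ is a finite union of connected components of $S$; replacing $S$ by $S_{L, d}$, this $S$ is of finite type over $k$, and any normal cycle $g \colon Z \to X$ of $L$-degree at most $d$ is of the form $f_{s}$ for a unique $s \in S(k)$. Since $k \subseteq \bbC$ are algebraically closed of characteristic zero and $U$ is geometrically integral and normal, base change to $\bbC$ induces isomorphisms $\pi_1^\et(U_\bbC) \iso \pi_1^\et(U)$ and similarly for $g^{-1}(U)$; I may therefore base change the family to $\bbC$ and work there. (Cycles $g$ with $g(Z) \cap U = \varnothing$ give the trivial subgroup, so I may restrict to the open subset of $S$ where fibers meet $U$.)

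Next I apply Proposition \ref{Prop:MakingAFamilyOfCycleTopologicallyTrivial} to obtain a surjective immersion of finite type $\iota \colon S' \to S_\bbC$ such that, on each connected component of $S'$, the family $\cZ \times_S S' \to S'$ is a topological fiber bundle. Since \emph{loc.cit.} is proved via Thom's first isotopy lemma, the same method applied to $f \colon \cZ \to X$ with $X$ stratified by $U$ and its complement will allow me---upon refining $\iota$ further---to arrange that the open subset $f^{-1}(U) \subseteq \cZ \times_S S'$ is also a topological fiber bundle, compatibly with the given trivialization. Granting this, on each connected component $T'$ of $S'(\bbC)$ with base point $s_0$, I obtain continuous families of homeomorphisms $Z_{s_0} \iso Z_s$ (for $s \in T'$) carrying $Z_{s_0, U}$ onto $Z_{s, U}$; composing with the $f_s$ gives a continuous homotopy between $f_{s_0}|_{Z_{s_0, U}}$ and $f_s|_{Z_{s, U}}$, viewed as maps $Z_{s_0, U} \to U$. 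Homotopic continuous maps induce equal maps on topological $\pi_1$ up to conjugation, and taking closures in the profinite completions extends this to $\pi_1^\et$. Hence on $T'$ the image of $\pi_1^\et(Z_{s, U}) \to \pi_1^\et(U)$ is constant up to conjugation.

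To conclude, since $S_{L, d}$ is of finite type, so is $S'$, and thus $S'(\bbC)$ has only finitely many connected components---each irreducible component of a finite-type $\bbC$-scheme being analytically connected---so only finitely many conjugacy classes of images arise, as required. The main obstacle is precisely the refinement step in which the fiber-bundle trivialization is made compatible with the open subset $U$: this requires a careful extension of the stratified isotopy argument of \cite{KollarPaper} underpinning Proposition \ref{Prop:MakingAFamilyOfCycleTopologicallyTrivial}, which must be verified in the present setting.
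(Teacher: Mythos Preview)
Your argument is correct and follows essentially the same route as the paper: reduce to $\bbC$, parameterize normal cycles of bounded degree by the finite-type piece $S_{L,d}$ of an exhaustive family, topologically trivialize via Proposition~\ref{Prop:MakingAFamilyOfCycleTopologicallyTrivial}, and conclude that the conjugacy class of the image depends only on the connected component. The paper simply asserts that the restricted family $\cZ_U \to S_U$ is again a topological fiber bundle, whereas you correctly flag that this is not automatic from Proposition~\ref{Prop:MakingAFamilyOfCycleTopologicallyTrivial} as stated and requires the further stratification (compatible with $U$ and $X \smallsetminus U$) that the underlying Thom--Whitney argument in \cite{KollarPaper} supplies; your caution here is warranted, and the refinement you propose is exactly what makes the step rigorous.
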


\begin{proof} Since the geometric \'etale fundamental group is insensible to extension of scalars, and since there are more normal cycles on $X_{\bbC}$ than on $X$, there is no loss of generality in assuming $k = \bbC$. 

Let $(\pi \colon \cZ \to S, f \colon \cZ \to X)$ be an exhaustive family of normal cycles satisfying the property in the statement of Proposition \ref{Prop:ExistenceExhaustiveFamily}. According to Proposition \ref{Prop:MakingAFamilyOfCycleTopologicallyTrivial}, up to taking a surjective immersion of finite type, one may suppose that, for each connected component $T$ of $S$, the induced map $\pi^{-1}(T)(\bbC) \to T(\bbC)$ is a topological fiber bundle. 

Let $S_U$ be the image of $\cZ_U := f^{-1}(U)$ in $S$ via $\pi$. Then \[(\pi \colon \cZ_U \to S_U, f \colon \cZ_U \to U)\] is an exhaustive family of normal cycles on $U$; moreover, for each connected component $T$ of $S_U$, the induced map $\pi^{-1}(T)(\bbC) \to T(\bbC)$ is again a topological fiber bundle. It follows that, for  $t, t' \in T(\bbC)$, the images of $\pi_1^\top(\cZ_{U, t}(\bbC))$ and $\pi_1^\top(\cZ_{U, t}(\bbC))$ in $\pi_1^\top(U(\bbC))$ are conjugated subgroups. Passing to profinite completions, the corresponding affirmation holds for \'etale fundamental groups.

Let $C_{L, d}$ be the set of conjugacy classes  of subgroups of $\pi_1^\et(U)$ obtained as the image of $\pi^\et_1(g^{-1}(U)) \to \pi_1^\et(U)$ with $g \colon Z \to X$ a normal cycle such that 
\[  \deg(Z, g^\ast L) \le d. \]
With the notation of Proposition \ref{Prop:ExistenceExhaustiveFamily}, the argument above shows the inequality
\[ \# C_{L, d} \le \# (\textup{connectected components of }S_{L, d})\]
and the right-hand side is $< \infty$ by the cited result.
\end{proof}

\subsection{Proof of Theorem \ref{ThmIntro:GeometricPart}} Let us begin with two easy facts:

\begin{lemma} \label{Lemma:DegreeOfGaloisCoversOnSubvarieties} Let $\pi \colon X' \to X$ be a finite surjective morphism between integral normal varieties over $k$ which is Galois of group $G$ and \'etale over a non-empty open subset $U$ of $X$. Let $Y'$ an integral closed subvariety of $X'$ and $Y:= \pi(Y')$. Then, the map $\pi_{\rvert Y'} \colon Y' \to Y$ has degree
\[ \# \im (\pi_1^\et(\nu^{-1}(U \cap Y)) \to G),\]
where $\nu \colon \tilde{Y} \to Y$ is the normalization.
\end{lemma}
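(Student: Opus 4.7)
The plan is to compute $\deg(\pi_{\rvert Y'} \colon Y' \to Y)$ by pulling back the Galois cover $\pi \colon X' \to X$ to the normalization $\tilde{Y}$ of $Y$, restricting to where $\pi$ is \'etale, and identifying $Y'$ (essentially) with a connected component of the resulting $G$-torsor. I would assume throughout that $\nu^{-1}(U \cap Y) \neq \emptyset$---this is automatic whenever $Y' \cap \pi^{-1}(U) \neq \emptyset$, which is the only case relevant for the application to Theorem~\ref{ThmIntro:GeometricPart}.

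Set $V := \nu^{-1}(U \cap Y) \subseteq \tilde{Y}$. The composition $V \hookrightarrow \tilde{Y} \overset{\nu}{\to} Y \hookrightarrow X$ factors through $U$, so the base change
\[ P := X' \times_X V = \pi^{-1}(U) \times_U V \too V \]
is a finite \'etale Galois cover with group $G$. Since $V$ is integral and normal, hence connected, $P \to V$ is a $G$-torsor; standard Galois theory of \'etale covers identifies its connected components with the cosets of $H := \im(\pi_1^\et(V) \to G)$ in $G$, each such component being finite \'etale of degree $|H|$ over $V$.

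Next I would analyze $W := Y' \times_X V$ as a closed subscheme of $P$. Since $V \to X$ factors through $Y$, one has $W = Y' \times_Y V$. The fibre of $W$ over the generic point $\eta_V$ of $V$ is
\[ Y' \times_Y \Spec K(\tilde Y) = Y' \times_Y \Spec K(Y) = \Spec K(Y'), \]
using that $\nu$ is birational. Hence the generic fibre of $W \to V$ has degree $[K(Y') : K(Y)] = \deg(Y' \to Y)$. Since $P$ is smooth over $V$, in particular reduced and equidimensional of dimension $\dim V$, the closure in $P$ of the unique generic point of $W$ lying over $\eta_V$ is a closed irreducible subscheme of $P$ of maximal dimension; it must therefore coincide with one of the connected components of $P$, yielding $\deg(Y' \to Y) = |H|$.

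The key subtlety is ensuring that the irreducible subscheme of $P$ carved out by $Y'$ is really a connected component of the $G$-torsor, and not some lower-dimensional or embedded piece; this is guaranteed by the smoothness of $P \to V$ combined with the function-field computation above. The remainder of the proof is routine tracking of base changes of birational morphisms.
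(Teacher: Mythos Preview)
Your proof is correct and follows essentially the same approach as the paper's. Both arguments restrict to the \'etale locus, pull the cover back to the normalization $\tilde{Y}$ (or its open subset $V$), and identify the piece coming from $Y'$ with a connected component of the resulting $G$-torsor; the paper phrases this as ``the normalization $\tilde{Y}'$ of $Y'$ is one of the connected components of $\tilde{Y}\times_X X'$'', while you carry out the equivalent identification via the function-field computation $K(P_0)=K(Y')$.
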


\begin{proof} Since the degree is computed on the generic point, there is no harm in replacing $X$ by $U$ and $X'$ by $\pi^{-1}(U)$, so that the morphism $\pi$ is finite \'etale. 

The induced morphism $\tilde{Y} \times_X X' \to \tilde{Y}$ is \'etale, thus the fibered product $\tilde{Y} \times_X X'$ is normal (\cite[Exp. I, Th\'eor\`eme 9.5 (i)]{SGA1}). The connected components of $\tilde{Y} \times_X X'$ are therefore integral (\cite[\href{https://stacks.math.columbia.edu/tag/033M}{Lemma 033M}]{stacks-project}), and the normalization $\tilde{Y}'$ of $Y'$ is one of them.

The usual dictionary between connected covers and subgroups of the fundamental group implies that the degree of the induced map $\tilde{\pi} \colon \tilde{Y}' \to \tilde{Y}$ is $\# \im (\pi_1^\et(\tilde{Y}) \to G)$. The normalization being a birational map, the degree of $\pi \colon Y' \to Y$ coincides with that of $\tilde{\pi}$, whence the statement.
\end{proof}

\begin{lemma} \label{Lemma:SeparateFiniteSetsWithSubgroups} Let $\Gamma$ be a residually finite group and $F$ a finite subset of $\Gamma$. Then, there is a finite-index normal subgroup $N$ of $\Gamma$ such that the map $F \to \Gamma / N$ is injective.
\end{lemma}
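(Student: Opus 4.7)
The plan is to exploit the definition of residual finiteness directly. Recall that $\Gamma$ being residually finite means that the natural map $\Gamma \to \widehat{\Gamma}$ to the profinite completion is injective, which in turn (since $\widehat{\Gamma} = \projlim_{N} \Gamma/N$ over finite-index normal subgroups $N$) amounts to the statement that for every nontrivial $g \in \Gamma$, there is a finite-index normal subgroup of $\Gamma$ not containing $g$.

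First, I would enumerate the finitely many pairs of distinct elements of $F$. For each ordered pair $(x,y) \in F \times F$ with $x \ne y$, the element $xy^{-1}$ is nontrivial, so by residual finiteness I may choose a finite-index normal subgroup $N_{x,y} \trianglelefteq \Gamma$ with $xy^{-1} \notin N_{x,y}$.

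Next I would set
\[ N := \bigcap_{\substack{x,y \in F \\ x \ne y}} N_{x,y}. \]
This is a finite intersection of finite-index normal subgroups, hence itself a finite-index normal subgroup of $\Gamma$. For any two distinct $x,y \in F$ we have $xy^{-1} \notin N_{x,y} \supseteq N$, so the images of $x$ and $y$ in $\Gamma/N$ differ, i.e. the map $F \to \Gamma/N$ is injective, as required.

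There is no real obstacle here: the only point to be careful about is that residual finiteness, as defined via the profinite completion, directly supplies finite-index \emph{normal} subgroups (rather than just finite-index subgroups, from which one would otherwise have to pass to the normal core). Given this, the argument is simply a one-line application of closure under finite intersection of finite-index normal subgroups.
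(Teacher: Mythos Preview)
Your proof is correct and follows essentially the same approach as the paper: the paper forms the set $F' = \{\gamma\delta^{-1} : \gamma,\delta \in F\} \smallsetminus \{e\}$, picks for each $\gamma \in F'$ a finite-index normal subgroup $N_\gamma$ avoiding it, and intersects. Your version indexes by pairs $(x,y)$ rather than by elements of $F'$, but the argument is identical.
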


\begin{proof} Consider the finite subset $F':= \{ \gamma \delta^{-1} : \gamma, \delta \in F \} \smallsetminus \{ e\}$ where $e$ is the neutral element of $\Gamma$. 

Saying that the group $\Gamma$ is residually finite amounts to the fact that, for each $\gamma \in \Gamma \smallsetminus \{ e\}$, there is a finite-index normal subgroup $N_\gamma$ to which $\gamma$ does not belong. Then, the finite-index normal subgroup $N := \bigcap_{\gamma \in F'} N_\gamma$ does the job.
\end{proof}

We are now in position to proceed with the proof.

\begin{proof}[Proof of {Theorem \ref{ThmIntro:GeometricPart}}]
To begin with, recall the setup: let $X$ be a normal irreducible projective variety over an algebraically closed field $k$ of characteristic $0$, $L$ an ample line bundle, $U$ an non-empty open subset of $X$ such that the \'etale fundamental group of $U$ is large, and $d \ge 1$ an integer. 

By Lefschetz's principle, the varieties $X$, $U$ and the line bundle $L$ can be defined over a subfield of $k$ finitely generated over $\bbQ$. Since having large \'etale fundamental group is a property that passes to algebraically closed subfields, there is no loss of generality in assuming that $k$ is a subfield of $\bbC$.

Then, according to Proposition \ref{Prop:FinitenessConjugationClasses}, there are finitely many subgroups $H_1, \dots, H_r$ of $\pi_1^\et(U)$ such that, given a normal cycle $f \colon Z \to X$ such that $\deg(Z, f^\ast L) \le d$, the image of the group homomorphism $\pi_1^\et(f^{-1}(U)) \to \pi_1^\et(U)$ is conjugated to some of the subgroups $H_i$. 

Needless to say, by definition of large \'etale fundamental group, each of the subgroups $H_i$ is infinite. By design, the \'etale fundamental group is profinite (in particular, residually finite), thus each finite subset injects into some finite quotient: Lemma \ref{Lemma:SeparateFiniteSetsWithSubgroups} implies that there exists a normal subgroup $N$ of $\pi_1^\et(U)$ such that, for each $i = 1, \dots, r$, the image of $ H_i \to G:=\pi_1^\et(U) / N$ has cardinality $\ge d$. 

Let $U' \to U$ be the finite \'etale cover of $U$ associated with the subgroup $N$. As argued in the proof of Lemma \ref{Lemma:DegreeOfGaloisCoversOnSubvarieties}, the normality of $U$ implies that of $U'$, thus the variety $U'$, \emph{a priori} just connected, is integral. Let $\pi \colon X' \to X$ be the normalization of $X$ in $U'$. By construction, the variety $X'$ is integral normal, and the morphism $\pi$ is Galois of group $G$ and \'etale over $U$.

To see that such a cover fulfills the requirements, let $Y'$ be a positive-dimensional integral closed subvariety of $X'$ meeting $\pi^{-1}(U)$ and set $Y := \pi(Y')$. The projection formula reads
\[ \deg(Y', \pi^\ast L_{\rvert Y'}) = \deg(\pi_{\rvert Y'}) \deg(Y, L_{\rvert Y}),\]
where $\deg(\pi_{\rvert Y'})$ is the degree of the finite map $Y' \to Y$ induced by $\pi$. Of course, if $\deg(Y, L_{\rvert Y}) > d$, then $\deg(Y', \pi^\ast L_{\rvert Y'}) \ge d$. 

Suppose instead $\deg(Y, L_{\rvert Y}) \le d$ and let $\nu \colon \tilde{Y} \to Y$ be the normalization. By the projection formula (or Lemma \ref{Lemma:DegreeAndBirationalMorphisms}),
\[ \deg(\tilde{Y}, \nu^\ast L_{\rvert \tilde{Y}}) = \deg(Y, L_{\rvert Y}) \le d.\]
According to Lemma \ref{Lemma:DegreeOfGaloisCoversOnSubvarieties}, the map $\pi_{\rvert Y'}$ has degree $\# \im (\pi_1^\et(\nu^{-1}(U \cap Y)) \to G)$. On the other hand, by construction, the image of $\pi_1^\et(\nu^{-1}(U \cap Y)) \to \pi_1^\et(U)$ is conjugated to some of the subgroups $H_i$, thus
\[ \# \im (\pi_1^\et(\nu^{-1}(U \cap Y)) \to G) =  \# \im (H_i \to G) \ge d.\]
By ampleness of $L$, the degree $\deg(Y, L_{\rvert Y})$ is a positive integer, thus the projection formula implies $\deg(Y', \pi^\ast L_{\rvert Y'}) \ge d$ as desired.\end{proof}
\section{Arithmetic}

Let $K$ be a number field and $\cO_K$ its ring of integers.

\subsection{Growth rates} \label{sec:GrowthRates}

\subsubsection{Hermitian line bundles} A \emph{Hermitian line bundle} $\bar{\cL}$ on a proper and flat $\cO_K$-scheme $\cX$ is the datum of a line bundle $\cL$ on $\cX$ and, for every embedding $\sigma \colon K \to \bbC$, a continuous metric $\| \cdot \|_\sigma$ on the holomorphic line bundle $\cL_{\rvert \cX_\sigma(\bbC)}$. Moreover, the collection of metrics $\{ \| \cdot \|_\sigma \}$ is supposed to be compatible with complex conjugation. 

For a morphism of $\cO_K$-schemes $f \colon \cY \to \cX$, where the $\cO_K$-scheme $\cY$ is also proper and flat, the pull-back $f^\ast \bar{\cL}$ is defined in the evident way.

\subsubsection{Degree} The \emph{(Arakelov) degree} of a Hermitian line bundle $\bar{\cL}$ on $\Spec \cO_K$ is
\[ \deg \bar{\cL} =  \log \# (\cL / s\cO_K) - \sum_{\sigma \colon K \to \bbC} \log \| s\|_\sigma, \]
where $s$ is a non-zero element of the $\cO_K$-module $\cL$; the quantity above does not depend on its choice because of the product formula.

\subsubsection{Extension of scalars} Let $K'$ be a finite extension  of $K$ and \[\pi \colon \Spec \cO_{K'} \too \Spec \cO_{K}\] the morphism induced by the inclusion of $\cO_K$ in $\cO_{K'}$. Given a Hermitian line bundle $\bar{\cL}$ over $\cO_K$, the Hermitian line bundle $\pi^\ast \bar{\cL}$ over $\cO_{K'}$ has degree 
\begin{equation} \label{Eq:ArakelovDegreeOnAnExtension} \deg \pi^\ast \bar{\cL} = [K' : K] \deg \bar{\cL}. \end{equation}

\subsubsection{Height} Let $\bar{\cL}$ be a Hermitian line bundle on a proper and flat $\cO_K$-scheme $\cX$. By the valuative criterion of properness, a $K$-rational point $P$ of $\cX$ extends uniquely to an $\cO_K$-valued point $\cP$ of $\cX$. The \emph{height} of the point $P$ with respect to the Hermitian line bundle $\bar{\cL}$ is
\[ h_{\bar{\cL}}(P) = \frac{\deg \cP^\ast \bar{\cL}}{[K : \bbQ]} .\]
Replacing the number field $K$ by a finite extension permits to define the height for any point of $\cX$ with values in an algebraic closure $\bar{K}$ of $K$ (equation \eqref{Eq:ArakelovDegreeOnAnExtension} implies that the height is well-defined). A routine variation of the proof of \cite[Theorem B.3.2 (d)]{HindrySilverman} yields:

\begin{proposition} \label{Prop:BoundedDifferenceHeight} Let $\cX$ and $\cX'$ be proper and flat $\cO_K$-schemes endowed respectively with Hermitian line bundles $\bar{\cL}$ and $\bar{\cL}'$. 

Suppose that there exists an isomorphism $f \colon \cX_K \to \cX'_K$ between the generic fibers of $\cX$ and $\cX'$, and an isomorphism $\cL_{\rvert \cX_K} \cong f^\ast \cL'_{\rvert \cX'_K}$ of line bundles over $\cX_K$. Then, 
\[ \sup_{P \in \cX(\bar{K})} |h_{\bar{\cL}}(P) - h_{\cL'}(f(P))| < + \infty.\]
\end{proposition}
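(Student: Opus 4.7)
The plan is to reduce the comparison of heights on two different schemes to a single scheme via a graph construction, and then to show that a Hermitian line bundle trivial on the generic fiber has bounded height.

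Let $\cZ$ be the scheme-theoretic closure of the graph of $f$ inside $\cX \times_{\cO_K} \cX'$. As the schematic closure of a subscheme of the generic fiber, $\cZ$ is flat over $\cO_K$, and it is proper as a closed subscheme of $\cX \times_{\cO_K} \cX'$. The two projections $p \colon \cZ \to \cX$ and $p' \colon \cZ \to \cX'$ restrict on $\cZ_K$ to $\id_{\cX_K}$ and $f$, respectively, under the graph isomorphism $\cZ_K \cong \cX_K$. By the valuative criterion of properness, each $P \in \cX(\bar K)$ lifts uniquely to $\tilde P \in \cZ(\bar K)$ with $p(\tilde P) = P$, and then $p'(\tilde P)$ must coincide (again by uniqueness of extensions) with the canonical extension of $f(P) \in \cX'(\bar K)$. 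Since $(p \circ \tilde \cP)^\ast = \tilde\cP^\ast \circ p^\ast$ for Hermitian line bundles, one gets $h_{\bar\cL}(P) = h_{p^\ast \bar\cL}(\tilde P)$ and $h_{\bar\cL'}(f(P)) = h_{(p')^\ast \bar\cL'}(\tilde P)$. Setting $\bar\cN := p^\ast\bar\cL \otimes ((p')^\ast \bar\cL')^{\otimes -1}$, the hypothesis $\cL_{\rvert \cX_K} \cong f^\ast \cL'_{\rvert \cX'_K}$ pulls back to a trivialization of $\cN_{\cZ_K}$. It thus suffices to prove the following general fact: \emph{for any proper flat $\cO_K$-scheme $\cY$ and any Hermitian line bundle $\bar\cN$ on $\cY$ whose generic fiber $\cN_{\cY_K}$ is trivial, the height $h_{\bar\cN}$ is bounded on $\cY(\bar K)$.}

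To this end, pick a nowhere-vanishing global section of $\cN_{\cY_K}$ and, after multiplying by a suitable non-zero element of $\cO_K$, extend it to a regular global section $s$ of $\cN$ on $\cY$. Its divisor $D = \div(s)$ is supported on fibers over a finite set $\{\frak p_1, \dots, \frak p_m\}$ of primes of $\cO_K$. For $P \in \cY(\bar K)$ with canonical extension $\cP \colon \Spec \cO_{K(P)} \to \cY$, the section $\cP^\ast s$ is non-zero (since $P$ lies on $\cY_K$ and $D$ is vertical), and computing the Arakelov degree with this trivialization yields
\[ [K(P):\bbQ]\, h_{\bar\cN}(P) \;=\; \log \#\bigl(\cP^\ast\cN / (\cP^\ast s)\, \cO_{K(P)}\bigr) \;-\; \sum_{\sigma \colon K(P)\to\bbC} \log \|\cP^\ast s\|_\sigma. \]
The archimedean sum is bounded in absolute value by $[K(P):\bbQ] \cdot \max_\sigma \sup_{\cY_\sigma(\bbC)} |\log \|s\|_\sigma|$, which is finite because $s$ is nowhere-zero and continuous on the compact analytic space $\cY_\sigma(\bbC)$. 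The non-archimedean term is the Arakelov degree of the effective divisor $\cP^\ast D$ on $\Spec \cO_{K(P)}$, supported over primes above the $\frak p_i$.

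The main obstacle is bounding this non-archimedean contribution uniformly in $P$. A clean way is to replace $\cY$ by a projective regularization $\tilde \cY \to \cY$ (available by Lipman in dimension two, and in general up to a de Jong alteration after correcting by the generic degree), so that Cartier divisors supported on a single fiber $\tilde\cY_{\frak p_i}$ lie in the finitely generated free abelian group on its irreducible components, each appearing with a positive multiplicity in the whole fiber class $[\tilde\cY_{\frak p_i}]$. Then $D$ is dominated in absolute value by $c \sum_i [\tilde\cY_{\frak p_i}]$ for some integer $c \ge 1$, and the identity $\deg \cP^\ast [\tilde\cY_{\frak p_i}] = [K(P):K] \log |\cO_K / \frak p_i|$ gives $|\deg \cP^\ast \cO(D)| \le c [K(P):K] \sum_i \log |\cO_K/\frak p_i|$. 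Dividing by $[K(P):\bbQ] = [K:\bbQ]\,[K(P):K]$ yields a bound on $h_{\bar\cN}$ independent of $P$, concluding the proof.
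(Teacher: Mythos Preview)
Your argument is essentially correct. The paper does not actually give a proof here---it only refers to the classical height machine (\cite[Theorem B.3.2(d)]{HindrySilverman}), whose standard proof proceeds by writing a line bundle as a difference of very ample ones and comparing Weil heights via explicit projective embeddings. Your route is genuinely different and more intrinsically Arakelov-theoretic: the graph construction reduces the comparison to a single model, and then you reduce to showing that $h_{\bar\cN}$ is bounded whenever $\cN$ is generically trivial.

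Two small points. First, the parenthetical ``after correcting by the generic degree'' is unnecessary and potentially misleading: heights pull back on the nose along any morphism of proper flat $\cO_K$-schemes, so if $\tilde P\in\tilde\cY(\bar K)$ is any lift of $P$ under the alteration $\pi$, then $h_{\pi^\ast\bar\cN}(\tilde P)=h_{\bar\cN}(P)$ with no correction. Relatedly, in your last paragraph one should work with $K(\tilde P)$, which may be strictly larger than $K(P)$; this is harmless precisely because of the normalization of the height. Second, invoking de~Jong alterations is heavier than required. A lighter route avoids regularity altogether: also choose a regular global section $t$ of $\cN^\vee$ nowhere-vanishing on $\cY_K$; then $f:=s\otimes t$ lies in the finite $\cO_K$-algebra $A:=\Gamma(\cY,\cO_{\cY})$ and is a unit in $A\otimes_{\cO_K} K$, so Cayley--Hamilton for $f$ acting on the finite $\cO_K$-module $A$ furnishes a nonzero $a_0\in\cO_K$ with $a_0\in fA$. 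For any $P$, the image of $f$ in $\cO_{K(P)}$ then divides $a_0$, whence
\[
\log\#\bigl(\cP^\ast\cN/(\cP^\ast s)\,\cO_{K(P)}\bigr)\;\le\;\log\#\bigl(\cO_{K(P)}/a_0\bigr)\;=\;[K(P):K]\,\log\lvert N_{K/\bbQ}(a_0)\rvert,
\]
which is the desired uniform non-archimedean bound.
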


A line bundle $\cL$ on a proper and flat $\cO_K$-scheme is \emph{generically ample} if its restriction to the generic fiber $\cX_K$ is ample. In this framework the Northcott property can be stated as follows (see for instance \cite[Theorem B.2.3]{HindrySilverman}):

\begin{proposition} \label{Prop:FinitenessBoundedPoints} Let $\cX$ be a proper and flat $\cO_K$-scheme together with a Hermitian line bundle $\bar{\cL}$ on $\cX$. If $\cL$ is generically ample, then, for a real number $c$, the set 
\[ \{ P \in \cX(K) : h_{\bar{\cL}}(P) \le c \} \]
is finite.
\end{proposition}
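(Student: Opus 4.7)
The plan is to reduce to the classical Northcott theorem on $\bbP^N$ by comparing $h_{\bar{\cL}}$ with a standard Weil height via Proposition \ref{Prop:BoundedDifferenceHeight}. First, the hypothesis that $\cL_K$ is ample on the proper $K$-scheme $\cX_K$ ensures that $\cX_K$ is projective and that some tensor power $\cL_K^{\otimes m}$ is very ample; choose such an $m$ together with the resulting closed immersion $i \colon \cX_K \hookrightarrow \bbP^N_K$ satisfying $i^\ast \cO_{\bbP^N_K}(1) \simeq \cL_K^{\otimes m}$. Next, I would take $\cY$ to be the scheme-theoretic closure of $i(\cX_K)$ inside $\bbP^N_{\cO_K}$: this is a proper flat $\cO_K$-scheme whose generic fiber is canonically identified with $\cX_K$. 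Endowing $\cO_{\bbP^N_{\cO_K}}(1)$ with, for instance, the Fubini--Study metrics at each archimedean place and restricting produces a Hermitian line bundle $\bar{\cM}$ on $\cY$.

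Proposition \ref{Prop:BoundedDifferenceHeight} then applies to the two Hermitian models $(\cX, \bar{\cL}^{\otimes m})$ and $(\cY, \bar{\cM})$, whose generic fibers and the associated line bundles on them are canonically isomorphic. This produces a constant $C \in \bbR$ with $|m \, h_{\bar{\cL}}(P) - h_{\bar{\cM}}(P)| \le C$ for every $P$ in the common set $\cX(K) = \cY(K)$ (the equality coming from the valuative criterion of properness applied to both models). Consequently, the set in the statement is contained in $\{Q \in \cY(K) : h_{\bar{\cM}}(Q) \le mc + C\}$, which is itself a subset of the set of $K$-rational points of $\bbP^N$ whose standard Weil height is bounded by $mc + C$.

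The conclusion then follows from the classical Northcott theorem on $\bbP^N$, which is the single external input used black-box. No serious obstacle is expected: the only point worth verifying carefully is that heights attached to two different proper flat integral models of the same generic fiber differ by a bounded amount, and this is precisely what Proposition \ref{Prop:BoundedDifferenceHeight} is designed to guarantee; everything else amounts to bookkeeping.
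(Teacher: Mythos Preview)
Your argument is correct. The paper does not actually prove this proposition: it is stated as the Northcott property and simply referenced to \cite[Theorem B.2.3]{HindrySilverman}. What you have written is precisely the standard reduction---embed the generic fiber in projective space via a very ample power of $\cL_K$, take the closure in $\bbP^N_{\cO_K}$ as a second model (which is flat over the Dedekind base because scheme-theoretic closures of generic-fiber subschemes are torsion-free), and invoke Proposition~\ref{Prop:BoundedDifferenceHeight} to compare $m\,h_{\bar{\cL}}$ with the restriction of a standard height on $\bbP^N$. The final appeal to Northcott on $\bbP^N$ is exactly the black box the cited reference supplies. There is nothing to correct; one cosmetic remark is that the paper later works with the sup-norm metric on $\cO_{\bbP^N}(1)$ rather than Fubini--Study, but any two continuous metrics on a compact space differ by a bounded amount, so this does not affect your bound.
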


\subsubsection{Counting function} \label{sec:CountingFunction} Let $\cX$ be a proper and flat $\cO_K$-scheme, $\bar{\cL}$ a generically ample Hermitian line bundle on $\cX$ and $\cU$ an open subset of $\cX$. For a subring $R$ of $K$ containing $\cO_K$ and a real number $c$, Proposition \ref{Prop:FinitenessBoundedPoints} permits to define 
\[ \nu(\cX, \bar{\cL}, \cU, R, c) := \log^+ \# \{ P \in \cU(R) : h_{\bar{\cL}}(P) \le c \}, \]
where $\log^+ z := \log \max \{1, z \}$ for $z \in \bbR$. The \emph{growth rate} of $R$-points of $\cU$ is
\[ \growth(\cX, \bar{\cL}, \cU, R) := \limsup_{c \to + \infty} \frac{\nu(\cX, \bar{\cL}, \cU, R, c)}{c}. \]
Clearly, such a function is non-decreasing in the variable $R$ (with respect to inclusion).

\subsubsection{Growth rates} Let $X$ be a proper $K$-scheme, $L$ an ample line bundle on $X$ and $U$ an open subset of $X$. Choose a proper and flat $\cO_K$-scheme $\cX$, a Hermitian line bundle $\bar{\cL}$ on $\cX$ and an open subset $\cU$ of $\cX$ whose generic fibers are respectively $X$, $L$ and $U$. By Proposition \ref{Prop:BoundedDifferenceHeight}, the real numbers
\begin{align*}
\grrat_K(X, L, U) &= \growth(\cX, \bar{\cL}, \cU, K), \\
\grint_K(X, L, U) &= \sup_S \growth(\cX, \bar{\cL}, \cU, \cO_{K, S}),
\end{align*}
where the supremum ranges on the finite set of places $S$ of $K$ containing the Archimedean ones, do not depend on the chosen $\cX$, $\bar{\cL}$ and $\cU$. They are called the \emph{growth rate} respectively of \emph{rational} and \emph{integral points} of $U$ (with respect to $X$ and $L$). Clearly,
\[ \grint_K(X, L, U) \le \grrat_K(X, L, U). \]

\begin{remark} Some considerations:
\begin{enumerate}
\item The growth rate of rational and integral points differ in general when $U$ is not proper. For instance, take $X = \bbP^1_K$, $L = \cO(1)$ and $U = \bbP^1_K \smallsetminus \{ 0, 1, \infty \} $. Then,
\begin{align*}
\grrat_K(X, L, U) &= \grrat_K(X, L, X) = 2[K : \bbQ], \\
\grint_K(X, L, U) &= 0,
\end{align*}
because of the finiteness of solutions to the $S$-unit equation (see, for instance, \cite[Chapter 5]{BombieriGubler}).

\item Let $K'$ be a finite extension of $K$, $X'$ and $U'$ the $K'$-schemes deduced respectively from $X$ and $U$ by extending scalars to $K'$, and $L'$ the line bundle on $X'$ deduced from $L$. Then,
\[
\grrat_K(X, L, U) \le \grrat_{K'}(X', L', U') ,
\]
and similarly for the growth rate of integral points. However, as the example above shows, in general the real numbers
$ \grrat_{K'}(X', L', U')$ tend to $\infty$ as soon as the degree of $K'$ does.
\item For an integer $n \ge 1$,
\[ \grrat_K(X, L, U)= n  \grrat_K(X, L^{\otimes n}, U)\]
and similarly for the growth rate of integral points. In particular, these growth rates really depend on the line bundle $L$ and not only on its restriction to $U$.
\end{enumerate}
\end{remark}

\subsection{Proof of Theorem \ref{ThmIntro:RationalGrowthVarietiesSmallDegree}} 

\subsubsection{Statement}
Let $N \ge 1$ be an integer. For a locally closed subvariety $U$ of $\bbP^N_K$ and a real number $c$, with the notation of section \ref{sec:CountingFunction}, let 
\[ \nu_K(U, c) := \nu(\cX, \bar{\cL}, \cU, K, c),\]
where $\cX$ is the Zariski closure of $U$ in $\bbP^N_{\cO_K}$, $\cZ$ is the Zariski closure of $\cX_K \smallsetminus U$ in $\bbP^N_{\cO_K}$, $\cU = \cX \smallsetminus \cZ$, and $\bar{\cL}$ the Hermitian line bundle on $\bbP^N_{\cO_K}$ obtained by endowing the line bundle $\cO_{\bbP^N}(1)$ on $\bbP^N_{\cO_K}$ with metrics, for an embedding $\sigma \colon K \to \bbC$,
\[ \| s\|_\sigma(x) = \frac{|s(x)|_\sigma}{\displaystyle \max_{i = 0, \dots, N} |x_i|_\sigma }, \]
where $s$ is a local section of $\cO(1)$ and $x_0, \dots, x_N$ are the homogeneous coordinates of a point $x$ in $\bbP^N(\bbC)$.

The height function associated with the Hermitian line bundle $\bar{\cL}$ is, for a $K$-rational point $x$ of $\bbP^N_K$,
\[h(x):= h_{\bar{\cL}}(x) = \sum_{v \in \rV_K^0} \log \max_{i = 0, \dots, N} |x_i|_v + \sum_{\sigma \colon K \to \bbC} \log \max_{i = 0, \dots, N} |x_i|_\sigma, \]
where $\rV_K^0$ is the set of finite places of $K$. In the formula above, for a $p$-adic place $v$, the absolute value $|\cdot|_v$ is normalized as $|p|_v= p^{-[K_v : \bbQ_p]}$ where $K_v$ is the completion of $K$ with respect to $v$.

\begin{theorem} \label{Thm:APrioriBound} Let $Z$ be a closed subvariety of $\bbP^N_K$, $\epsilon > 0$ a real number and $n \ge 0$, $D\ge 1$ integers. 

Then, there is a real number $C_{n, D} = C_{n, D}(N, K, Z, \epsilon)$ with the following property: for an integral $n$-dimensional closed subvariety $X$ of $\bbP^N_K$ of degree $\le D$ such that each positive-dimensional integral closed  subvariety in $X$ not contained in $Z$ has degree $\ge d^{\dim Z}$ for some integer $d \ge 1$, and a real number $c \ge [K : \bbQ] \epsilon$, the following inequality holds:
\[ \nu_K(X \smallsetminus Z, c) \le c [K : \bbQ] (1 + \epsilon) \frac{n(n+3)}{2d}  + C_{n, D}.\]
\end{theorem}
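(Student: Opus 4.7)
The plan is to argue by induction on $n = \dim X$, with $C_{n, D}$ defined recursively during the induction. The base case $n = 0$ is trivial: a zero-dimensional integral closed subvariety of $\bbP^N_K$ of degree $\leq D$ has at most $D$ geometric points and hence at most $D$ $K$-rational points, so the stated inequality holds with $C_{0, D} := \log^+ D$ and no linear term in $c$.

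For the inductive step, fix $n \geq 1$ and assume the theorem in all strictly smaller dimensions. The core arithmetic input to be invoked is a uniform determinant-method covering theorem in the style of Heath--Brown, Broberg, Salberger and Chen (\emph{cf.}~\cite{ChenCrelleI, ChenCrelleII}): for any auxiliary $\delta > 0$, it produces a constant $D' = D'(N, D, \delta)$ and a family of integral proper closed subvarieties $Y_1, \dots, Y_s$ of $X$, each of degree $\leq D'$, such that every $K$-rational point of $X$ of height $\leq c$ lies in $Y_1 \cup \cdots \cup Y_s$ and
\[ \log s \;\leq\; \frac{n + 1}{(\deg X)^{1/n}} \, c \, [K : \bbQ] \,+\, \delta \, c \,+\, O(1). \]
Applying the theorem's hypothesis to $X$ itself forces $\deg X \geq d^n$, so $\log s \leq c \, [K : \bbQ] (n + 1)/d + \delta c + O(1)$. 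The subvarieties $Y_i$ now split according to whether they are contained in $Z$: those with $Y_i \subseteq Z$ satisfy $Y_i \setminus Z = \emptyset$ and contribute nothing, while for the remaining $Y_i$, every positive-dimensional integral closed subvariety $Y'$ of $Y_i$ not contained in $Z$ is also a subvariety of $X$ not contained in $Z$, and hence has degree $\geq d^{\dim Y'}$, so $Y_i$ inherits the hypothesis of the theorem with the same $Z$ and $d$. Applying the inductive hypothesis with $\delta$ in place of $\epsilon$ yields
\[ \nu_K(Y_i \setminus Z, c) \;\leq\; (1 + \delta) \, \frac{(n - 1)(n + 2)}{2 d} \, c \, [K : \bbQ] \,+\, C_{n - 1, D'}. \]

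Combining via $\nu_K(X \setminus Z, c) \leq \log s + \max_{Y_i \not\subseteq Z} \nu_K(Y_i \setminus Z, c)$ and the telescoping identity $\tfrac{n(n + 3)}{2} = (n + 1) + \tfrac{(n - 1)(n + 2)}{2}$, the coefficient of $c \, [K : \bbQ]$ in the resulting bound equals $\tfrac{n(n + 3)}{2d} + \delta \cdot \tfrac{(n - 1)(n + 2)}{2d}$, which is $\leq (1 + \epsilon) \tfrac{n(n + 3)}{2d}$ for $\delta$ chosen sufficiently small in terms of $\epsilon$ and $n$. Absorbing the remaining additive error into a new constant defines $C_{n, D}$ and closes the induction. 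The main obstacle is invoking the determinant-method covering result in precisely this uniform form: both the uniform bound $D'(N, D, \delta)$ on the degrees of the auxiliary subvarieties (independent of the specific $X$) and the coefficient $(n + 1)/d$ in the bound for $\log s$, translated into the Arakelov height normalization used in the paper, are what make the induction close on the shape $n(n + 3)/(2d)$; most of the technical work in executing this plan lies in extracting these uniform estimates from the cited literature.
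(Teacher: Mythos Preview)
Your inductive strategy and the use of the telescoping identity $\tfrac{n(n+3)}{2} = (n+1) + \tfrac{(n-1)(n+2)}{2}$ match the paper exactly. However, the covering theorem you invoke is stronger than what is actually available: Chen's result (the paper's Theorem~\ref{Thm:BrobergChen}) produces hypersurfaces $H_1,\dots,H_r$ of degree $\le B_D$ not containing $X$ that cover only the points of $X^{\reg}(K)$ of bounded height---not all of $X(K)$---and the sections $X_i = H_i \cap X$ need not be integral. The paper fills both gaps with Proposition~\ref{Prop:DegreeSingularLocus}, which supplies uniform bounds (depending only on $N$ and $D$) on the degree and the number of irreducible components both of $X^{\sing}$ and of each $X_i$. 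The singular locus is then handled by a separate application of the inductive hypothesis to its components, and each $X_i$ is broken into at most $R_{DB_D}$ integral pieces of degree $\le DB_D$ before the inductive hypothesis is applied. This uniform control is a genuinely independent ingredient, not something extractable from the determinant method itself; without it your recursion does not close, because you cannot guarantee that the $Y_i$ you feed back into the induction have degree bounded solely in terms of $N$, $D$, and $\epsilon$.

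A minor stylistic point: the paper runs the induction with the \emph{same} $\epsilon$ throughout, since Chen's bound already carries the factor $(1+\epsilon)$; the telescoping then combines directly to $(1+\epsilon)\tfrac{n(n+3)}{2d}$ with no need for your auxiliary $\delta$.
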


Note that, for $n = 0$, the above inequality reads $\nu_K(X \smallsetminus Z, c) \le C_{n, D}$. Before going into the proof of the preceding statement, let us see how it permits to prove Theorem \ref{ThmIntro:RationalGrowthVarietiesSmallDegree}.

\begin{proof}[{Proof of Theorem \ref{ThmIntro:RationalGrowthVarietiesSmallDegree}}] First of all, and rather crucially, notice that the hypotheses and the conclusions are insensitive to taking powers of $L$. Therefore, up to replacing $L$ with a multiple big enough, one may assume that $L$ is very ample. Via the associated embedding $X \to \bbP(\Gamma(X, L)^\vee)$, Theorem \ref{Thm:APrioriBound} can be applied  to give
\[ \grrat_K(X, L, X \smallsetminus Z) \le [K : \bbQ]  \frac{n(n +3)}{2d}, \]
as desired.
\end{proof}

\subsubsection{Proofs} The statement will be deduced by induction from the following:

\begin{theorem}[{\cite[Theorem A]{ChenCrelleII}}] \label{Thm:BrobergChen} Let  $\epsilon > 0$ be a real number and $D \ge 0$ an integer. 

Then, there are positive integers $A_{D} = A_{D}(N, K, \epsilon)$ and $B_{D} = B_D(N, K, \epsilon)$ with the following property: for an integral closed subvariety $X$ of $\bbP^N_K$ of degree $D' \le D$, and a real number $c \ge [K : \bbQ] \epsilon$, the set
\[ \{ x \in X^\reg(K) : h(x) \le c\}\]
can be covered by no more than
\[ A_D \exp\left(  \frac{n + 1}{D'^{1/n}} (1 + \epsilon) [K: \bbQ]c \right)\]
hypersurfaces in $\bbP^N_K$ of degree $\le B_D$ not containing $X$, where $n = \dim X$.
\end{theorem}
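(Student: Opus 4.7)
The plan is to implement the $p$-adic/archimedean determinant method initiated by Heath--Brown, extended to projective varieties over number fields by Broberg, with the sharp exponent $(n+1)/D'^{1/n}$ due to Salberger and transferred to the number field setting by Chen. First I would reduce to the case where $X$ is geometrically integral by passing to a finite extension of $K$, absorbing the loss into the constants $A_D, B_D$ (which are already allowed to depend on $N, K, \epsilon$). The value of $D' \le D$ ranges in a bounded set, so one may fix $D'$ throughout, and the case $n = 0$ is trivial.

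Next, I would cover the set $\{x \in X^\reg(K) : h(x) \le c\}$ by a bounded number of \emph{analytic patches}, each patch being specified by (i) a place $v_0$ of $K$ (archimedean or non-archimedean) together with a standard affine chart of $\bbP^N$ where one coordinate is of maximal $v_0$-adic size, and (ii) a reduction modulo an auxiliary prime ideal $\frp$ of $\cO_K$ of bounded residue degree, chosen so that $X$ has good reduction at $\frp$. The points in a single patch all reduce to the same smooth $\bbF_\frp$-point $\bar x$ of $X$, near which one disposes of local analytic parameters $t_1, \dots, t_n$ of $X$ of controlled $v_0$-adic size.

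The core is the \emph{determinant alternative}. Fix an auxiliary integer $T \ge 1$ and set $d_T := \dim_K \Gamma(X, \cO_{\bbP^N}(T)_{|X}) = D' T^n/n! + O(T^{n-1})$ by Hilbert--Samuel. Choose a basis $F_1, \dots, F_{d_T}$ adapted to the filtration by order of vanishing at $\bar x$: each $F_j$ vanishes at $\bar x$ to order $\ge \mu_j$ in the parameters $t_i$, arranged so that $\sum_j \mu_j \ge \tfrac{n}{n+1}\, T\, d_T\, (1 - o(1))$. Given $d_T$ points $x_1, \dots, x_{d_T}$ in a single patch, form the $d_T \times d_T$ matrix $\Phi = \bigl( F_j(x_i)\, \cdot \max_k |x_{i,k}|_{v_0}^{-T} \bigr)_{i,j}$. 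Either $\det \Phi = 0$, in which case a nonzero kernel vector (over $K$) furnishes a hypersurface of degree $T$ passing through all the $x_i$ and not containing $X$; or $\det \Phi \ne 0$, in which case the product formula applied to $\det \Phi \in K$ yields contradictory estimates. Hadamard's inequality combined with the Taylor expansion $F_j(x_i)/\max_k|x_{i,k}|_{v_0}^T = O(r^{\mu_j})$, where $r$ is the $v_0$-adic diameter of the patch, gives a strong upper bound at $v_0$, while the height bound $h(x_i) \le c$ controls the contributions at the remaining places by $\exp(T\, d_T\, [K:\bbQ]\, c)$. The resulting incompatibility forces a polynomial relation as soon as the cardinality of the patch exceeds the asserted bound.

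The hard part is the sharp Salberger-type estimate on $\sum_j \mu_j$, which requires a careful analysis of the local Hilbert function at a smooth point of a variety of dimension $n$ and degree $D'$, together with a uniform construction of a basis that is graded by order of vanishing. All lower-order errors are absorbed into the factor $(1+\epsilon)$. Once this is in place, optimizing $T \sim (c/D')^{1/n}$ balances the $v_0$-adic smallness against the global height bound and yields the exponent $\tfrac{n+1}{D'^{1/n}}(1+\epsilon)[K:\bbQ]\, c$. Summing over the bounded collection of patches (their count depending only on $N, K, \epsilon, D$) produces the prefactor $A_D$, while the maximal auxiliary degree gives $B_D$.
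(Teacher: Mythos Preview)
The paper does not prove this theorem: it is quoted verbatim as \cite[Theorem A]{ChenCrelleII} and used as a black box in the proof of Theorem~\ref{Thm:APrioriBound}. So there is no ``paper's own proof'' to compare against; your proposal is an outline of the argument behind the cited reference rather than of anything the authors wrote.

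That said, your sketch is a recognisable account of the determinant method in the Heath--Brown/Broberg/Salberger lineage, and would lead to the stated bound. Two remarks. First, the key vanishing-order estimate should read
\[
\sum_j \mu_j \;\ge\; \frac{n}{n+1}\, D'^{1/n}\, T\, d_T\,(1 - o(1)),
\]
not $\tfrac{n}{n+1}\,T\,d_T$: the extra factor $D'^{1/n}$ is precisely what produces the sharp exponent $(n+1)/D'^{1/n}$ after balancing against the height contribution. (It arises because the local parameters at a smooth point exhaust monomials up to degree $\approx D'^{1/n}T$ before one runs out of global sections of $\cO(T)$.) Second, Chen's actual proof in \cite{ChenCrelleII} is phrased in the language of slopes of adelic vector bundles rather than the explicit single-prime patching you describe; the two are equivalent in spirit, but the slope formalism packages the product-formula step and the choice of auxiliary prime more uniformly, and handles the number-field case without the ad hoc reduction to geometrically integral $X$ via base change that you invoke (which, as written, would require descending the covering hypersurfaces back to $K$).
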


\begin{proof}[{Proof of Theorem \ref{Thm:APrioriBound}}]  The proof goes by induction on $n$. For $n = 0$, there is nothing to do, as $X$ is a singleton. 

Suppose $n \ge 1$ and the result true in dimension $< n$. Let $A_D$ and $B_D$ be as in the statement of Theorem \ref{Thm:BrobergChen}, and $R_D$ as in that of Proposition \ref{Prop:DegreeSingularLocus}.

Let $X$ be an integral $n$-dimensional closed subvariety of $\bbP^N_K$ of degree $D' \le D$ such that all positive-dimensional integral closed subvarieties $Y$ of $X$ not contained in $Z$ have degree $\ge d^{\dim Y}$. Quite trivially, such a property is inherited by integral subvarieties $X'$ of $X$ not contained in $Z$: any positive-dimensional integral closed  subvariety $Y$ of $X'$ not contained in $Z$ has degree $\ge d^{\dim Y}$. 

Now, by Theorem \ref{Thm:BrobergChen}, there exist $r$ hypersurfaces $H_1, \dots, H_r$ of $\bbP^N_K$ of degree $\le B_D$ with
\begin{equation} \label{Eq:BoundNumberHypersurfaces} r \le  A_D \exp\left(  \frac{n + 1}{D^{1/n}} (1 + \epsilon) [K: \bbQ]c \right) 
\end{equation}
not containing $X$ such that the set $\{ x \in X^{\reg}(K) : h(x) \le c \}$ is contained in the union of $H_1, \dots, H_r$. For $i= 1, \dots, r$, the hyperplane section $X_i := H_i \cap X$ of $X$ is pure of dimension $n - 1$ and has degree $\le D B_D$. 

Each irreducible component $X'_i$ of $X_i$ has degree $\deg (X'_i, \cO(1)) \le D B_D$ by Lemma \ref{Lemma:DegreeIrreducibleComponents}. Therefore, it is possible to apply the induction hypothesis to such an $X_i'$ and obtain
\[ \nu_K(X_i' \smallsetminus Z, c) \le c [K : \bbQ] (1 + \epsilon)  \frac{(n-1)(n+2)}{2d} + C_{n-1,  D B_D},\]
because $d \le D B_D$.

Since the hyperplane section $X_i$ has at most $R_{DB_D}$ irreducible components by Proposition \ref{Prop:DegreeSingularLocus}, 
\[ \nu_K(X_i \smallsetminus Z, c) \le c [K : \bbQ] (1 + \epsilon)   \frac{(n-1)(n+2)}{2d} + C_{n-1, D B_D} + \log R_{DB_D}.\]
Taking into account \eqref{Eq:BoundNumberHypersurfaces}, and recalling $D' \ge d^n$, the preceding inequality yields
\begin{equation} \label{Eq:BoundNumberRegularPoints} \nu_K(X^{\reg} \smallsetminus Z, c) \le c [K : \bbQ] (1 + \epsilon)   \frac{n(n+3)}{2d} + C'_{n, D},\end{equation}
where $C'_{n, D} := C_{n-1, D B_D} + \log R_{D B_D} + \log A_D$ and the following identity has been noticed:
\[ (n + 1) +  \frac{(n-1)(n+2)}{2} = (n+1) + \sum_{i = 1}^{n-1} (i + 1) = \sum_{i = 1}^{n} (i + 1) =  \frac{n(n+ 3)}{2}.\]

Let $Y_1, \dots, Y_s$ be the irreducible components of the singular locus $X^{\sing}$ of $X$. According to Proposition \ref{Prop:DegreeSingularLocus}, the subvariety $Y_i$ has degree $\le R_D$ and $s \le R_D$. The induction hypothesis, applied to an irreducible component $Y_i$ not contained in $Z$, gives
\[ \nu_K(Y_i \smallsetminus Z, c) \le c [K : \bbQ] (1 + \epsilon)   \frac{(n-1)(n+2)}{2d} +  \tilde{C}_{n-1, R_D},\]
where $\tilde{C}_{n-1, R_D} := \max\{C_{0, R_D}, \dots,  C_{n-1, R_D} \}$, because the irreducible component $Y_i$ has degree $\le R_D$ and dimension $\le n- 1$. In particular,
\begin{equation} \label{Eq:BoundNumberSingularPoints} \nu_K(X^{\sing} \smallsetminus Z, c) \le c [K : \bbQ] (1 + \epsilon)   \frac{(n-1)(n+2)}{2d} + C''_{n, D}, 
\end{equation}
where $C''_{n, D} := \tilde{C}_{n-1, R_D} + \log R_D$.

Combining the inequalities \eqref{Eq:BoundNumberRegularPoints} and \eqref{Eq:BoundNumberSingularPoints} yields the bound in the statement with $C_{n, D} := \log( \exp(C'_{n, D}) + \exp(C''_{n, D}))$.
\end{proof}

\subsection{Growth rates on covers} The last ingredient for the proof of the main theorem is a version of the Chevalley-Weil theorem for growth rates of integral points (Proposition \ref{Prop:BoundGrowthByGaloisCovers}). Its proof is a variation of the classical one (see \cite[4.2]{SerreMordellWeil} or Theorem 2.3 in Corvaja's contribution to \cite{CorvajaCW}).

\subsubsection{Reminder on twists} Let $S$ be a Noetherian scheme and $G$ a finite group. The constant group $S$-scheme with value $G$ is still denoted $G$.

A \emph{principal $G$-bundle} is a faithfully flat finitely presented (\emph{a posteriori} \'etale) $S$-scheme $P$ endowed with an action of $G$ such that the morphism $G \times_S P \to P \times_S P$, $(g, p) \mapsto (gp, p)$ is an isomorphism. The set of isomorphism classes of principal $G$-bundles over $S$ is denoted $\rH^1(S, G)$ or simply $\rH^1(A, G)$ if $S = \Spec A$ is affine.

\begin{proposition} \label{Prop:FinitenessGaloisCohomology} Let $K$ be a number field, $S$ a finite set of places of $K$ containing the Archimedean ones.  Then, 
\[ \# \rH^1(\cO_{K, S}, G) < + \infty.\]
\end{proposition}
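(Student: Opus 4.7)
The plan is to reduce the finiteness of $\rH^1(\cO_{K,S}, G)$ to the classical Hermite-Minkowski theorem on the finiteness of number fields of bounded degree unramified outside a fixed finite set of places.

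First, I would interpret $\rH^1(\cO_{K,S}, G)$ group-theoretically. Since $\Spec \cO_{K,S}$ is connected, a principal $G$-bundle $P \to \Spec \cO_{K,S}$ is classified, up to isomorphism, by a continuous homomorphism
\[ \phi \colon \pi_1^\et(\Spec \cO_{K,S}) \too G, \]
well defined up to $G$-conjugation. This yields a bijection
\[ \rH^1(\cO_{K,S}, G) \iso \Hom_{\mathrm{cont}}(\pi_1^\et(\Spec \cO_{K,S}), G) / G, \]
where $G$ acts by inner conjugation. This step is a standard consequence of the fact that $G$-torsors over a connected Noetherian scheme are classified by the étale fundamental group of the base, modulo conjugation.

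Next, I would observe that any such $\phi$ has image contained in $G$, hence $\ker \phi$ is an open normal subgroup of $\pi_1^\et(\Spec \cO_{K,S})$ of index dividing $|G|$. Such open normal subgroups correspond bijectively to finite Galois extensions $L/K$ of degree dividing $|G|$ which are unramified at every finite place $v \notin S$ (and split at the Archimedean places of $S$, which are finitely many). The key arithmetic input is then the Hermite-Minkowski theorem: there are only finitely many extensions of $K$ inside a fixed algebraic closure $\bar{K}$ of bounded degree and unramified outside $S$. This bounds the number of possible kernels.

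Finally, for each of the (finitely many) admissible quotients $\Gamma = \pi_1^\et(\Spec \cO_{K,S}) / \ker \phi$, which is a finite group of order $\le |G|$, the set $\Hom(\Gamma, G)$ is finite (of cardinality $\le |G|^{|G|}$), and \emph{a fortiori} so is its image in $\Hom_{\mathrm{cont}}(\pi_1^\et(\Spec \cO_{K,S}), G)/G$. Summing over the finitely many possible kernels gives the desired finiteness. The only non-formal step is the invocation of Hermite-Minkowski; the rest is bookkeeping.
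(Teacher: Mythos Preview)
Your proof is correct and follows essentially the same strategy as the paper: both reduce to the Hermite--Minkowski theorem via the fact that a $G$-torsor over $\cO_{K,S}$ is finite \'etale, the paper by reading off the \'etale $K$-algebra underlying the generic fibre of the torsor, you by passing through the bijection with $\Hom_{\mathrm{cont}}(\pi_1^\et(\Spec \cO_{K,S}), G)/G$. One small inaccuracy: your parenthetical claim that the extensions $L/K$ are ``split at the Archimedean places of $S$'' is not correct---open subgroups of $\pi_1^\et(\Spec \cO_{K,S})$ impose no condition at places in $S$---but this is harmless for the argument.
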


\begin{proof} A principal $G$-bundle $P$ over $\cO_{K, S}$ is finite \'etale as an $\cO_{K, S}$-scheme. Thus its generic fiber $P_K$ is the spectrum of a $K$-algebra $A$ whose dimension as a $K$-vector space is $\# G$ and is the product of (finitely many) finite extensions of $K$, all of which are unramified outside $S$. The Hermite-Minkowski bound implies that, for any integer $D \ge 1$, there are only finitely many isomorphism classes of finite extensions of $K$ of degree $\le D$ unramified outside $S$ (\cite[4.1]{SerreMordellWeil}). The statement follows.
\end{proof}

\begin{definition} Let $X$ be an $S$-scheme endowed with an action of $G$ and $P$ a principal $G$-bundle over $S$. The \emph{twist} of $X$ by $P$, if it exists, is the categorical quotient of $X \times_S P$ by the diagonal action $g(x, p) = (gx, gp)$ of $G$.
\end{definition}

Clearly, isomorphic principal $G$-bundles give rise to isomorphic twists. With an abuse of notation, for $t \in \rH^1(S, G)$, let $X_t$ denote the twist of $X$ by a principal $G$-bundle in the isomorphism class $t$. For a scheme $X$ over $P$, the datum of an equivariant action of $G$ is equivalent to a descent datum. Therefore, the theory of faithfully flat (actually, \'etale) descent shows the following existence result:

\begin{proposition}[{\cite[Proposition 4.4.9]{Olsson}}] \label{Prop:ExistenceTwist} Let $X$ be an affine $S$-scheme endowed with an action of $G$ and $P$ a principal $G$-bundle over $S$. Then, the twist of $X$ by $P$ exists.
\end{proposition}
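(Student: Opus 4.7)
The plan is to construct $X_P$ via fpqc descent along the structure morphism $\pi \colon P \too S$ of the torsor. Since $P$ is a principal $G$-bundle, the map $\pi$ is finite \'etale (in particular faithfully flat and quasi-compact), and since affine morphisms form a fibered category satisfying effective fpqc descent (\cite[Exp.~VIII, \S 2]{SGA1}), it is enough to equip $Y := X \times_S P$, viewed as an affine $P$-scheme via the second projection, with a descent datum over $\pi$ and then to identify the resulting $S$-scheme with the categorical quotient of $Y$ by the diagonal $G$-action.

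First I would translate to the language of quasi-coherent algebras: write $X = \Spec_S A$ for a quasi-coherent $\cO_S$-algebra $A$ carrying an $\cO_S$-linear action of $G$, so that $Y = \Spec_P \pi^\ast A$. Next I would exhibit the descent datum. Let $p_1, p_2 \colon P \times_S P \too P$ be the two projections; the principal bundle structure provides the canonical isomorphism $\alpha \colon G \times_S P \overset{\sim}{\too} P \times_S P$, $(g, p) \mapsto (gp, p)$. The sought isomorphism $\phi \colon p_1^\ast Y \overset{\sim}{\too} p_2^\ast Y$ of affine $(P \times_S P)$-schemes is then defined, under the identification $\alpha$, by the formula $(x, (gp, p)) \mapsto (gx, (gp, p))$, the $G$-action on $X$ being used in the first coordinate. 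The cocycle condition on $P \times_S P \times_S P$ is a direct reformulation of the associativity of the $G$-action on $X$.

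Effective descent then produces an affine $S$-scheme $X_P$ together with an isomorphism $X_P \times_S P \iso Y$ identifying the canonical descent datum of the left-hand side with $\phi$. Unwinding, the $\cO_S$-algebra corresponding to $X_P$ is the equalizer of the two maps $\pi_\ast \pi^\ast A \rightrightarrows (\pi \circ p_1)_\ast p_1^\ast \pi^\ast A$ induced by $\phi$, which via $\alpha$ reads as the diagonal $G$-invariants $(\pi_\ast \pi^\ast A)^G$.

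Finally, to establish the universal property of the categorical quotient, I would start from a $G$-invariant $S$-morphism $f \colon Y \too T$; after base change to $P$ it becomes a morphism of $P$-schemes compatible with $\phi$, hence by descent of morphisms it factors uniquely through an $S$-morphism $X_P \too T$. The only genuine obstacle is the bookkeeping of the cocycle condition on the triple fibered product and the identification of the descended $\cO_S$-algebra with $(\pi_\ast \pi^\ast A)^G$; every other step is a formal invocation of faithfully flat descent, which applies thanks to $X$ being affine over $S$.
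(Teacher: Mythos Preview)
Your argument is correct and is precisely the approach the paper has in mind: the paper does not spell out a proof but only remarks, just before the proposition, that an equivariant $G$-action on a scheme over $P$ amounts to a descent datum along $P \to S$, and then cites \cite[Proposition 4.4.9]{Olsson} for the effectivity. You have simply filled in the details of that descent argument.
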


\begin{remark}
The construction of twists is functorial. Namely, let $X$, $Y$ be $S$-schemes endowed with an action of $G$, and $P$  a principal $G$-bundle over $S$ for which the twists of $X$ and $Y$ by $P$ exist. Then, a $G$-equivariant morphism $f \colon X \to Y$ induces a morphism  $f_P \colon X_P \to Y_P$ between twists.
\end{remark}

\subsubsection{Lifting points} A useful construction consists in twisting schemes to lift points. More precisely, let $X$ be an $S$-scheme and $\pi \colon Y \to X$ a principal $G$-bundle over $X$. For an $S$-valued point $x$ of $X$, the scheme-theoretic fiber of $\pi$ at $x$,
\[ P := Y \times_X S,\]
is a principal $G$-bundle over $S$. The twist of $Y$ by $P$ exists as $X$-scheme\footnote{Indeed, it is identified with the twist of $Y$ by the $G$-principal bundle $P \times_S X$, and the latter exists because $\pi$ is a finite morphism (in particular affine) according to the finiteness of $G$.} and the $G$-invariant map $\pi \colon Y \to X$ induces a morphism $\pi_P \colon Y_P \to X$.

\begin{lemma} \label{Lemma:LiftingByTwisting}There is an $S$-valued point $y$ of $Y_P$ such that $\pi_P(y) = x$.
\end{lemma}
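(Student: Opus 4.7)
The plan is to exhibit a canonical lift using the diagonal of the torsor $P$. First I would unwind the definitions: by the footnote preceding the lemma, $Y_P$ is the twist of $Y$ (viewed as an $X$-scheme with $G$-action) by the pulled-back principal bundle $P_X := P \times_S X \to X$, so that
\[ Y_P = (Y \times_X P_X)/G \iso (Y \times_S P)/G, \]
the $G$-action being diagonal, and $\pi_P \colon Y_P \to X$ is induced by the composite $\pi \circ \pr_1 \colon Y \times_S P \to X$, which is $G$-invariant since $G$ acts trivially on $X$.

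Next I would construct the candidate section. The first projection $P = Y \times_X S \to Y$ is $G$-equivariant, so the morphism
\[ \delta \colon P \too Y \times_S P, \qquad p \longmapsto (p, p), \]
is $G$-equivariant for the diagonal action on the target and the torsor action on the source. Since $P$ is a $G$-torsor over $S$, one has $P/G = S$; passing to quotients converts $\delta$ into a morphism $y \colon S \to Y_P$.

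Finally, I would verify that $\pi_P \circ y = x$. At the level of $Y \times_S P$ the composite $\pi_P \circ \delta$ sends $p \mapsto \pi(p)$, and by the very definition of $P = Y \times_X S$, the inclusion $P \hookrightarrow Y$ followed by $\pi$ factors as $P \to S \xrightarrow{x} X$. Hence the morphism induced on quotients is precisely $x$. The argument is purely formal once the identification $Y_P \iso (Y \times_S P)/G$ is in place; the only point requiring external input is the existence of this quotient, which is guaranteed by Proposition \ref{Prop:ExistenceTwist} (applied in the footnote because $\pi$ is finite, hence affine). I do not expect any substantial obstacle.
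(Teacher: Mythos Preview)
Your proof is correct and follows essentially the same idea as the paper's: both use the $G$-equivariant diagonal $P \to P \times_S P$ and pass to the quotient by $G$ to produce the section. The only cosmetic difference is that the paper first identifies the fiber $Y_P \times_X S$ with the self-twist $P_P = (P \times_S P)/G$ and then applies the diagonal, whereas you map directly into $Y \times_S P$; the underlying construction is identical.
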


\begin{proof} The scheme-theoretic fiber $Y_P \times_X P$ of $\pi_P$ at $x$ is isomorphic to the twist $P_P$ of $P$ by itself. The diagonal embedding $\Delta \colon P \to P \times_S P$ is $G$-equivariant and, taking its quotient by $G$, defines the wanted $S$-valued point $y \colon S \to P_P$.
\end{proof}

\subsubsection{Statement} A finite surjective morphism $f \colon Y \to X$ between algebraic varieties over $K$ is \emph{Galois} if, for each geometric point $\bar{x}$ of $X$, the group $\Aut(f)$  acts transitively on the geometric fiber $Y_{\bar{x}}$.

The group $\Aut(f)$ acts by definition on $Y$. For $t \in \rH^1(K, \Aut(f))$, let $Y_t$ be the twist of $Y$ and $f_t \colon Y_t \to X$ the twist of the $\Aut(f)$-invariant morphism $f$.

\begin{proposition} \label{Prop:BoundGrowthByGaloisCovers} Let $f \colon Y \to X$ be a finite surjective morphism between integral projective varieties over $K$, $U$ an open subset of $X$ over which $f$ is \'etale, and $L$ an ample line bundle on $X$. If the morphism $f$ is Galois, then
\[ \grint_K(X, L, U) \le \max_{t \in \rH^1(K, \Aut(f))} \grint_K(Y_t, f_t^\ast L, f_t^{-1}(U)).\]
\end{proposition}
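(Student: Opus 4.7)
The plan is to follow the classical Chevalley--Weil descent adapted to growth rates of integral points. First I would spread out the situation: pick flat projective models $\cX$, $\cY$ over $\Spec \cO_{K,S_0}$ for $X$, $Y$, a Hermitian line bundle $\bar{\cL}$ on $\cX$ extending $L$, an open $\cU \subseteq \cX$ restricting to $U$, and extend $f$ to a finite morphism $\cY \to \cX$. After enlarging $S_0$, the action of $G := \Aut(f)$ spreads to $\cY$ and $\cY_\cU := \cY \times_\cX \cU \to \cU$ becomes a principal $G$-bundle (finite étale, with free transitive $G$-action on geometric fibers). Since replacing $S$ by $S \cup S_0$ only enlarges the set of integral points, $\grint_K(X,L,U)$ can be computed as the sup over $S \supseteq S_0$.

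The core step is to classify the integral points of $\cU$ by the isomorphism class of their fiber. For $x \in \cU(\cO_{K,S})$ the scheme-theoretic fiber $P_x := \cY_\cU \times_{\cU, x} \Spec \cO_{K,S}$ is a principal $G$-bundle over $\cO_{K,S}$, and by Proposition \ref{Prop:FinitenessGaloisCohomology} it belongs to one of only finitely many classes $\tau \in \rH^1(\cO_{K,S}, G)$. For each $\tau$ I would fix a representative $P_\tau$; since $\cY \to \cX$ is finite, hence affine, Proposition \ref{Prop:ExistenceTwist}, applied with base $\cX$ to the $\cX$-scheme $\cY$ and the pulled-back bundle $P_\tau \times_{\cO_{K,S}} \cX$, produces a twist $\cY_\tau$ fitting into a morphism $f_\tau \colon \cY_\tau \to \cX$. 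Since twisting commutes with base change, the generic fiber of $f_\tau$ is $f_t \colon Y_t \to X$, with $t \in \rH^1(K, G)$ the image of $\tau$.

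Lemma \ref{Lemma:LiftingByTwisting}, applied to the base $\Spec \cO_{K,S}$, then yields for every $x$ with $[P_x] = \tau$ an $\cO_{K,S}$-point $y_x \in \cY_\tau(\cO_{K,S})$ with $f_\tau(y_x) = x$; this $y_x$ lies in $f_\tau^{-1}(\cU)$ because $x \in \cU$, and by functoriality of the height $h_{f_\tau^\ast \bar{\cL}}(y_x) = h_{\bar{\cL}}(x)$. The assignment $x \mapsto y_x$ is injective (distinct $x$'s have distinct lifts, since $f_\tau \circ y_x = x$), so
\[
\#\{x \in \cU(\cO_{K,S}) : h(x) \le c\} \le \sum_{\tau \in \rH^1(\cO_{K,S}, G)} \#\{y \in f_\tau^{-1}(\cU)(\cO_{K,S}) : h_{f_\tau^\ast \bar{\cL}}(y) \le c\}.
\]
Taking $\log^+$ introduces an additive error $\log \# \rH^1(\cO_{K,S}, G)$, which is washed out upon dividing by $c$ and letting $c \to \infty$. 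I obtain
\[
\growth(\cX, \bar{\cL}, \cU, \cO_{K,S}) \le \max_{\tau} \grint_K(Y_{t_\tau}, f_{t_\tau}^\ast L, f_{t_\tau}^{-1}(U)) \le \max_{t \in \rH^1(K, G)} \grint_K(Y_t, f_t^\ast L, f_t^{-1}(U)),
\]
and taking the supremum over $S$ on the left yields the result.

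The main obstacle I expect is the bookkeeping of twists over the integral base: one must verify that Proposition \ref{Prop:ExistenceTwist} applies to $\cY$ viewed as an \emph{affine $\cX$-scheme} (rather than as an $\cO_{K,S}$-scheme, which is only projective) using the finiteness of $f$, that the generic fiber of $\cY_\tau$ genuinely recovers $Y_t$ for the class $t$ induced by $\tau$, and that the lift produced by Lemma \ref{Lemma:LiftingByTwisting} is an honest $\cO_{K,S}$-point (not merely a $K$-point). Once these compatibilities are in place, the height comparison and the counting step are entirely formal.
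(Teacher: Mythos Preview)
Your proposal is correct and follows essentially the same Chevalley--Weil descent as the paper: spread out to integral models, classify $\cO_{K,S}$-points of $\cU$ by the class in $\rH^1(\cO_{K,S},G)$ of their fiber, lift via Lemma~\ref{Lemma:LiftingByTwisting} into the appropriate twist with the same height, invoke Proposition~\ref{Prop:FinitenessGaloisCohomology} so that the number of twists contributes only an additive constant, and then pass to the supremum over $S$. The only cosmetic differences are that the paper builds $\cY$ as the normalization of $\cX$ in $Y$ over $\cO_K$ (then enlarges $S$) rather than spreading out over $\cO_{K,S_0}$ directly, and extends each twist $\cY_{S,t}$ back to a proper model over $\cO_K$ by relative normalization before comparing growth rates; your version bypasses this by working over $\cO_{K,S}$ throughout, which is equally valid.
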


\begin{proof} Let $G:= \Aut(f)$. Up to taking a power of $L$---an operation that does not affect the statement---the variety  $X$ can be realized as the generic fiber of a projective flat $\cO_K$-scheme $\cX$ over which the line bundle $L$ extends to an ample line bundle $\cL$. Let $\cY$ be the normalization of $\cX$ in $Y$. The induced morphism $\cY \to \cX$, still denoted $f$, is finite surjective and Galois of group $G$. 

 Pick continuous metrics $\{ \| \cdot \|_{L, \sigma} \}_{\sigma \colon K \to \bbC}$ on $\cL$ so that $\bar{\cL} = (\cL, \| \cdot \|_{L, \sigma})$ is a metrized line bundle. Let $Z := X \smallsetminus U$, $\cZ$ its Zariski closure in $\cX$,  $\cU := \cX \smallsetminus \cZ$ and $\cV= f^{-1}(\cU)$.

Let $S$ be a finite set of places of $K$ containing the Archimedean ones. Up to enlarging $S$, one may assume that the morphism $\cV \times_{\cO_K} \cO_{K, S} \to   \cU \times_{\cO_K} \cO_{K, S}$ induced by $f$ is \'etale.  For an isomorphism class of $G$-principal bundles $t \in \rH^1(\cO_{K, S}, G)$, let $\cY_{S, t}$ be the twist of $\cY_S$ by $t$ and $f_{S, t} \colon \cY_{S, t} \to \cX \times_{\cO_K} \Spec \cO_{K, S}$ the induced morphism. Consider the relative normalization $f_t \colon \cY_t \to \cX$ of $\cX$ in $\cY_{S, t}$ (see \cite[\href{https://stacks.math.columbia.edu/tag/035H}{Definition 035H}]{stacks-project}) and set $\cV_t := f_t^{-1}(\cU)$.

\begin{claim} With the notation above,
\begin{equation} \label{Eq:ProofGrowthOfTwists}  
\growth(\cX, \bar{\cL}, \cU, \cO_{K, S}) \le \max_{t \in \rH^1(\cO_{K, S}, G)} \growth(\cY_t, f_t^\ast \bar{\cL}, \cV_t, \cO_{K, S}). 
\end{equation}
\end{claim}

\begin{proof}[Proof of the Claim] According to Proposition \ref{Prop:FinitenessGaloisCohomology}, the set $\rH^1(\cO_{K, S}, G)$ is finite. Therefore, the $\cO_{K}$-scheme 
\[ \tilde{\cY} := \bigsqcup_{t \in \rH^1(\cO_{K, S}, G)} \cY_t\]
is proper. Let $\tilde{f} \colon \tilde{\cY} \to \cX$ the morphism induced by the various twists of $f$. Given an $\cO_{K,S}$-point $x$ of $\cU$, there is an $\cO_{K, S}$-point $\tilde{x}$ of $\tilde{U}$ mapping to $x$. Indeed, the scheme-theoretic fiber \[\cP := \cY \times_\cX \Spec \cO_{K, S}\] of $f$ at $x$ is a principal $G$-bundle over $\cO_{K, S}$. Letting $t$ denote the isomorphism class of $\cP$, Lemma \ref{Lemma:LiftingByTwisting} states that there is an $\cO_{K, S}$-point $y$ of $\cV_t$ such that $f_t(y) = x$. By definition,
\[ h_{f_t^\ast \bar{\cL}}(y) = h_{\bar{\cL}}(x).\]
 In particular,
\[ \growth(\cX, \bar{\cL}, \cU, \cO_{K, S}) \le \growth(\tilde{\cY}, \tilde{f}^\ast \bar{\cL}, \tilde{f}^{-1}(\cU), \cO_{K, S}),\]
whence the claim.
\end{proof}

One concludes by bounding the right-hand side of \eqref{Eq:ProofGrowthOfTwists} by
\[ \max_{t \in \rH^1(K, G)} \grint_{K} (Y_t, f_t^\ast L, V_t),\]
where $V_t = f_t^{-1}(U)$, so that the right-hand side of the so-obtained inequality
\[ \growth(\cX, \bar{\cL}, \cU, \cO_{K, S}) \le \max_{t \in \rH^1(K, G)} \grint_{K} (Y_t, f_t^\ast L, V_t),\]
is independent of $S$.  As $S$ is arbitrary, taking the supremum over all finite sets of places of $K$ (containing the Archimedean ones) finishes the proof.
\end{proof}

\section{Proof of the main theorem}

\subsection{Setup} Let $\bar{K}$ be an algebraic closure of $K$. For a variety $V$ over $K$, let $\bar{V}$ denote the variety over $\bar{K}$ obtained by extending scalars. 

Keep the notation from the statement of the main theorem. There is no loss of generality in assuming $X$ geometrically integral and normal, as $U$ is so. Also, one may assume that $U$ has a $K$-rational point, for the main theorem is trivial otherwise. Let $\epsilon >0$ be a real number and pick an integer $d \ge 1$ such that
\[ \frac{n(n + 3)}{2d} [K:\bbQ] \le \epsilon,\]
where $n = \dim X$. 
\subsection{Geometric input} Thanks to Theorem \ref{ThmIntro:GeometricPart}, there exists a finite surjective morphism $f \colon X' \to \bar{X}$ of algebraic varieties over $\bar{K}$ with $X'$ integral, $f$ \'etale over $\bar{U}$ and, for a positive-dimensional integral closed subvariety $Y'$ of $X'$ not contained in $Z' := f^{-1}(\bar{Z})$,
\[ \deg(Y', f^\ast L_{\rvert Y'}) \ge d^{n}.\] 

\begin{claim} There exists an integral variety $X''$ over $K$ and a Galois finite surjective morphism $g \colon X'' \to X$ of algebraic varieties over $K$  such that $g$ is \'etale over $U$ and the morphism $\bar{g} \colon \bar{X}'' \to \bar{X}$ factors through the morphism $f \colon X' \to \bar{X}$.
\end{claim}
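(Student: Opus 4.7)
The plan is to construct $X''$ as the normalization of $X$ in a cleverly chosen connected Galois \'etale cover of $U$ defined over $K$. By standard spreading-out, choose a finite Galois extension $K'/K$ and a finite surjective morphism $f_{K'} \colon X'_{K'} \to X_{K'}$ between integral normal $K'$-varieties, \'etale over $U_{K'}$, whose base-change to $\bar K$ recovers $f$. View the composition $h \colon X'_{K'} \to X_{K'} \to X$ as a morphism of $K$-schemes: the second arrow $X_{K'} \to X$ is \'etale, because so is $\Spec K' \to \Spec K$ in characteristic zero, and therefore $h$ is finite surjective with $X'_{K'}$ integral and \'etale over $U$. Set $V := h^{-1}(U)$: it is non-empty (as $U(K) \neq \emptyset$ by assumption), integral, and finite \'etale over $U$. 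Fixing compatible geometric base-points, $V \to U$ corresponds to an open subgroup $H_V \subset \pi_1^{\et}(U)$ of finite index.

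Let $N$ denote the normal core of $H_V$ in $\pi_1^{\et}(U)$: an open normal subgroup of finite index, with $N \subset H_V$. Let $W \to U$ be the associated connected \'etale cover, necessarily Galois with group $\Gamma := \pi_1^{\et}(U)/N$. Set $X''$ to be the normalization of $X$ in $W$ (equivalently, in the Galois closure of the field extension $K(V)/K(U)$). Normality of $X$ and \'etaleness of $W \to U$ imply that $X''$ is integral normal, that $g \colon X'' \to X$ is finite surjective with $g^{-1}(U) = W$, and that the $\Gamma$-action on $W$ extends to a $\Gamma$-action on $X''$ realising $g$ as a Galois cover in the sense defined earlier in the paper.

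To obtain the factorisation of $\bar g$ through $f$, note that with the chosen base-points the open subgroup of $\pi_1^{\et}(\bar U)$ corresponding to the connected cover $f^{-1}(\bar U) \to \bar U$ is $H := H_V \cap \pi_1^{\et}(\bar U)$, and in particular $N \cap \pi_1^{\et}(\bar U) \subset H$. The crucial observation is that $N$ is normal in the \emph{arithmetic} fundamental group $\pi_1^{\et}(U)$, while $\pi_1^{\et}(\bar U)$ is normal in $\pi_1^{\et}(U)$; hence $\gamma (N \cap \pi_1^{\et}(\bar U))\gamma^{-1} = N \cap \pi_1^{\et}(\bar U)$ for every $\gamma \in \pi_1^{\et}(U)$. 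Consequently, the stabilisers in $\pi_1^{\et}(\bar U)$ of all points of the fibre $\pi_1^{\et}(U)/N$ of $\bar W \to \bar U$ are equal to $N \cap \pi_1^{\et}(\bar U)$, so every connected component of $\bar W$ is isomorphic, as an \'etale cover of $\bar U$, to the connected cover corresponding to $N \cap \pi_1^{\et}(\bar U) \subset H$. Each such component therefore dominates $f^{-1}(\bar U)$. Assembling these dominations yields a morphism $\bar W \to f^{-1}(\bar U)$ of $\bar U$-covers, which extends to a morphism $\bar X'' \to X'$ over $\bar X$ by the standard graph-closure argument, using that every integral component of $\bar X''$ is normal and finite over $\bar X$ and that $X'$ is finite and normal over $\bar X$.

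The main obstacle is the last step. The naive variant of taking the normal core of $H$ inside $\pi_1^{\et}(\bar U)$ (rather than that of $H_V$ inside $\pi_1^{\et}(U)$) would produce geometric components of $\bar X''$ covering some Galois conjugate $(f^{-1}(\bar U))^\sigma$ instead of $f^{-1}(\bar U)$ itself, and would thus fail to provide a factorisation through $f$. Working inside the full arithmetic fundamental group is what makes $N \cap \pi_1^{\et}(\bar U)$ stable under $\pi_1^{\et}(U)$-conjugation, which in turn forces all geometric components of $\bar W$ to be isomorphic as $\bar U$-covers and to admit a map to $f^{-1}(\bar U)$.
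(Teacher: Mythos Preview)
Your proof is correct, but it follows a genuinely different route from the paper's.

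The paper first replaces $X'$ by its Galois closure over $\bar X$, and then uses the $K$-rational point $x\in U(K)$ to descend $X'$ itself to $K$: choosing $x'\in f^{-1}(x)$ and letting $G$ be the stabiliser of $x'$ in $\Gal(\bar K(X')/K(X))$, one sets $X''$ to be the normalisation of $X$ in $\bar K(X')^{G}$. Because the geometric Galois group acts freely on the fibre over an \'etale point, $G$ meets it trivially and projects isomorphically onto $\Gal(\bar K/K)$; hence $X''$ is \emph{geometrically integral} with $\bar X''\cong X'$, so the factorisation $\bar g=f$ is immediate. This is the argument of Poonen, Lemma~3.5.57.

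Your construction instead spreads $f$ out to a finite extension $K'$, regards $X'_{K'}$ as a $K$-scheme, and takes the Galois closure of the resulting \'etale cover of $U$ by passing to the normal core in the \emph{arithmetic} $\pi_1^{\et}(U)$. The resulting $X''$ need not be geometrically integral, but you correctly observe that normality of $N$ in $\pi_1^{\et}(U)$ forces every connected component of $\bar W$ to correspond to the single subgroup $N\cap\pi_1^{\et}(\bar U)\subset H$, whence each dominates $f^{-1}(\bar U)$; the extension to $\bar X''\to X'$ then follows from the universal property of normalisation. This is enough for the application. Two minor remarks: your appeal to $U(K)\neq\emptyset$ is not actually needed (nonemptiness of $V$ follows from surjectivity of $h$), whereas it is essential in the paper's argument; and the paper's approach yields the slightly stronger conclusion that $X''$ is geometrically connected, which your construction does not give but which is not required downstream.
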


In particular, a positive-dimensional integral  closed subvariety $Y''$ of $X''$ not contained in $g^{-1} (Z)$ satisfies \[\deg(Y'', g^\ast L_{\rvert Y''}) \ge d^n \ge d^{\dim Y''}.\] (Note that $Y''$ is not necessarily geometrically irreducible, but no irreducible component of $\bar{Y}''$ is contained in $g^{-1}(\bar{Z})$.)

\begin{proof}[Proof of the Claim] Argue as in the proof of the implication (iii) $\Rightarrow$ (iv) in \cite[Lemma 3.5.57]{PoonenBook}. First, the map $X' \to \bar{X}$ may be assumed to be Galois, for it suffices to replace the function field $\bar{K}(X')$ of $X'$ by its Galois closure $F$ over the function field $\bar{K}(\bar{X})$ of $\bar{X}$, and $X'$ by its normalization in $F$. 

Let us show that $X'$ comes by extension of scalars from some cover $X'' \to X$ defined over $K$ with $X''$ geometrically integral and normal. For, pick a $K$-rational point of $U$ (which exists by assumption) and a $\bar{K}$-point $x'$ in $f^{-1}(x)$. Let $G$ be the stabilizer of $x'$ in the Galois group $\Gal(\bar{K}(X') / K(X))$. The sought-for $X''$ is obtained as the normalization of $X$ in the finite extension $\bar{K}(X')^G$ of $K(X)$. (See \emph{loc.cit.} for details.)
\end{proof}

\subsection{Arithmetic input} Let $G = \Aut(g)$ be the Galois group of the cover $g$. For an isomorphism class $t$ of principal $G$-bundles over $K$, let $X''_t$ be the twist of the variety $X''$ by $t$ and $g_t \colon X''_t \to X$  that of the morphism $g$. According to Proposition \ref{Prop:BoundGrowthByGaloisCovers},
\[ \grint_K(X, L, U) \le \max_{t \in \rH^1(K, G)} \grint_K(X''_t, g_t^\ast L, g_t^{-1}(U)).\]
For an irreducible subvariety $Y''$ of $X''_t$ not contained in $g_t^{-1}(Z)$,
\[\deg(Y'', g_t^\ast L_{\rvert Y''}) \ge d^{\dim Y''},\]
because, after extending scalars to $\bar{K}$, the cover $g_t$ is isomorphic to $g$. 

Theorem \ref{ThmIntro:RationalGrowthVarietiesSmallDegree} can therefore be applied to the integral projective variety $X''_t$, the closed subvariety $g_t^{-1}(Z)$, and the ample line bundle $g_t^\ast L$, to give
\[ \grrat_K(X''_t, g_t^\ast L, g_t^{-1}(U)) \le \frac{n(n + 3)}{2 d} [K:\bbQ] \le \epsilon.\]
Combining these two inequalities yields $\grint_K(X, L, U) \le \epsilon$, as the growth rate of integral points is lesser than that of rational points. As $\epsilon > 0$ is arbitrary, this concludes the proof. \qed

\small

\bibliography{./biblio}

\bibliographystyle{amsalpha}

\end{document}